\documentclass[11pt]{article}  

\usepackage{amsmath, amsthm, amsfonts, amssymb}
\usepackage[bookmarks]{hyperref}
\usepackage{indentfirst} 
\usepackage{marvosym}

\usepackage[a4paper]{geometry}

\newtheorem{lemma}{Lemma}[section]
\newtheorem{theorem}[lemma]{Theorem}
\newtheorem{proposition}[lemma]{Proposition}
\newtheorem{corollary}[lemma]{Corollary}
\renewenvironment{proof}[1][\proofname]{{\sc #1. }}{\qed}
\newtheorem{theoremletters}{Theorem}

\newtheorem{corollaryletters}[theoremletters]{Corollary}

\theoremstyle{definition}
\newtheorem{remark}[lemma]{Remark}
\newtheorem{example}[lemma]{Example}
\newtheorem{defin}[lemma]{Definition}{\bf}{\rm}

\newcommand{\abs}[1]{\ensuremath{\left| #1 \right|}}
\newcommand{\op}{\operatorname}
\newcommand{\ce}[2]{\pmb{\op{C}}_{#1}(#2)}
\newcommand{\no}[2]{\pmb{\op{N}}_{#1}(#2)}
\newcommand{\ze}[1]{\pmb{\op{Z}}(#1)}
\newcommand{\fra}[1]{\pmb{\op{\Phi}}(#1)}
\newcommand{\fit}[1]{\pmb{\op{F}}(#1)}
\newcommand{\rad}[2]{\pmb{\op{O}}_{#1}(#2)}
\newcommand{\syl}[2]{\op{Syl}_{#1}\left(#2\right)}
\newcommand{\hall}[2]{\op{Hall}_{#1}\left(#2\right)}


\begin{document}

\title{\bf Products of groups and class sizes of $\pi$-elements}

\author{\sc M. J. Felipe $\cdot$ A. Mart\'inez-Pastor $\cdot$ V. M. Ortiz-Sotomayor
\thanks{The first author is supported by Proyecto Prometeo II/2015/011, Generalitat Valenciana (Spain). The second author is supported by Proyecto MTM 2014-54707-C3-1-P, Ministerio de Econom\'ia, Industria y Competitividad (Spain), and by Proyecto Prometeo/2017/057, Generalitat Valenciana (Spain). The results in this paper are part of the third author's Ph.D. thesis, and he acknowledges the predoctoral grant ACIF/2016/170, Generalitat Valenciana (Spain).\newline\rule{6cm}{0.1mm}\newline
Instituto Universitario de Matemática Pura y Aplicada (IUMPA-UPV), Universitat Polit\`ecnica de Val\`encia, Camino de Vera s/n, 46022 Valencia, Spain\newline
\Letter: \texttt{mfelipe@mat.upv.es}, \texttt{anamarti@mat.upv.es}, \texttt{vicorso@doctor.upv.es} \newline
ORCID iDs: 0000-0002-6699-3135, 0000-0002-0208-4098, 0000-0001-8649-5742
}}

\date{}

\maketitle

\begin{abstract}
\noindent We provide structural criteria for some finite factorised groups $G=AB$ when the conjugacy class sizes in $G$ of certain $\pi$-elements in $A\cup B$ are either $\pi$-numbers or $\pi'$-numbers, for a set of primes $\pi$. In particular, we extend for products of groups some earlier results.

\medskip

\noindent \textbf{Keywords} Finite groups $\cdot$ Products of groups $\cdot$ Conjugacy classes $\cdot$ $\pi$-structure

\smallskip

\noindent \textbf{2010 MSC} 20D10 $\cdot$ 20D40 $\cdot$ 20E45 $\cdot$ 20D20
\end{abstract}


\section{Introduction}

Along the last decades, numerous researchers have investigated groups which can be factorised as the product of two subgroups. In this setting, one of the main goals is to study the influence that the structure of the factors has on the structure of the whole group (and vice versa). In some occasions, the imposition of certain permutability conditions on the subgroups in the factorisation has been revealed very useful in that task. A detailed account on this topic can be found in the book \cite{BEA}. Throughout this paper, we deal with products of groups that possess a especial chief series, the so-called \emph{core-factorisations}, introduced in \cite{FMOvan} (see also Definition \ref{definition_core}).

On the other hand, a current activity shows up that, in a factorised group, the sizes of the conjugacy classes in the group of the elements in the factors have a strong impact on the structure of the whole group (see, for instance, \cite{BCL, FMOprime, FMOvan, ZGS}). Our main purpose here is to study the $\pi$-structure of groups with a core-factorisation when the class lengths in the group of the $\pi$-elements in the factors are either $\pi$-numbers or $\pi'$-numbers, for a set of primes $\pi$. In fact, we present alternative proofs of some earlier results as consequences of our theorems when trivial factorisations are considered. We point out that Dolfi (\cite{D}) analysed this last property on the class lengths in a (not necessarily factorised) group, but focusing on all its elements (not only on those with order a $\pi$-number). It is worth also to highlight that, although some results on class sizes could be proved through elementary arguments, the analysis of the considered class size properties in the context of products of groups may need a wider approach, even for core-factorisations. Indeed, in a core-factorisation, there is no relation in general between the class size of an element in one factor and the size of the corresponding conjugacy class in the whole group, in contrast to other developments (see, for example, \cite{ZGS}).

The paper is structured in the following way: in Section \ref{sec_core} we gather the definition and some properties of core-factorisations. Later on, in Section \ref{ppo-pi-elements} we analyse groups with a core-factorisation such that the class lengths in the whole group of $\pi$-elements of prime power order in the factors are $\pi$-numbers (Theorem \ref{pi-decompo}). Then, in Section \ref{pi-elements}, we put focus on groups with a core-factorisation whose $\pi$-elements (not necessarily of prime power order) in the factors have class sizes equal to either $\pi$-numbers or $\pi'$-numbers (Theorem \ref{pi-pi'}, and Corollary \ref{corPiPi'} for not necessarily factorised groups). We also analyse in Theorem \ref{teosilvio} this last condition on the class sizes of every element in the factors. Finally, as a consequence of the previous results, prime power class sizes are studied for the $\pi$-elements in the factors of a group with a core-factorisation (Theorem \ref{teoprime}). We want to remark that, along the whole paper, we provide numerous examples which show the scope of the results presented.

In the sequel, all groups under consideration are finite. For a group $G$ and an element $x\in G$, we denote by $x^G$ the conjugacy class of $x$ in $G$, and its size is $\abs{x^G}=\abs{G:\ce{G}{x}}$. We represent the set of all prime divisors of a natural number $n$ by $\pi(n)$, and in particular we use $\pi(G)$ for the set of all prime divisors of the order of $G$. The set of all Hall $\pi$-subgroups of $G$ is expressed by $\hall{\pi}{G}$, where $\pi$ will always denote a set of primes. A group such that $G=\rad{\pi}{G} \times \rad{\pi'}{G}$ is said to be $\pi$-decomposable. By $\op{core}_X(H)$ we mean the core in a group $X$ of a subgroup $H$, i.e. the largest normal subgroup of $X$ contained in $H$. Given a group $G=AB$ which is the product of the subgroups $A$ and $B$, a subgroup $S$ is called prefactorised (with respect to this factorisation) if $S=(S\cap A)(S\cap B)$ (see \cite{AMB}). We recall that a subgroup $U$ covers a section $V/W$ of a group $G$ if $W(U\cap V)= V$. As usual, CFSG will denote the classification of finite simple groups. The remaining notation and terminology is standard within the theory of groups, and it is taken mainly from \cite{DH}. We also refer to this book for details about classes of groups.


\section{On core-factorisations}
\label{sec_core}

As mentioned in the introduction, along the paper we deal with a especial kind of products of groups, the \emph{core-factorisations}. We start this section by introducing that concept. Besides our initial inspiration in \cite{FMOvan} within the framework of products of groups with certain permutability conditions on the factors, this notion can be also motivated by the following observation: If $\pi$ is a set of primes and $G$ is a group that possesses both Hall $\pi$-subgroups and Hall $\pi'$-subgroups, say $H$ and $L$ respectively, then $G=HL$ is a $\pi$-separable group if and only if for a chief series of $G$ it holds that all the chief factors are covered by either $H$ or $L$.

\begin{defin}
Let $1\neq G=AB$ be the product of the subgroups $A$ and $B$. We say that $G=AB$ is a \textbf{core-factorisation} whenever $G$ possesses a chief series such that each chief factor of $G$ is covered by either $A$ or $B$.
\label{definition_core}
\end{defin}

We point out that this definition of a core-factorisation is equivalent to that given in \cite{FMOvan} (see Lemma \ref{core_charac} below). Next we collect some of its properties, some of which appear in the cited paper.

\begin{remark}
Let us state some remarkable facts:
\begin{enumerate}
	\item[(i)] If either $1\neq G=A$ or $1\neq G=B$, then $G=AB$ is always a core-factorisation. 
	
	\item[(ii)] If $G=AB$ is a core-factorisation, then there exists always a minimal normal subgroup of $G$ contained in either $A$ or $B$.
	
	\item[(iii)] \cite[Example 1]{FMOvan} Every (totally) mutually permutable product of two subgroups is a core-factorisation.
	
	\item[(iv)] By the initial paragraph, if $G$ is $\pi$-separable, then $G=HL$ is a core-factorisation for any $H\in\hall{\pi}{G}$ and $L\in\hall{\pi'}{G}$.
\end{enumerate}
\end{remark}

Notice that, in the last above statement, the property of having coprime orders for the subgroups in the factorisation is essential, as the next example shows.

\begin{example}
Let $G$ be a symmetric group of $4$ letters. Then $G=AB$ where $A=\langle (1,3,2,4), (1,2)(3,4)\rangle$ and $B=\langle (3,4), (2,3,4)\rangle$. Note that $G$ is clearly $\pi(A)$-separable (indeed it is soluble), but the unique minimal normal subgroup of $G$ is not covered by either $A$ or $B$, so $G=AB$ is not a core-factorisation.
\end{example}

We also provided a useful characterisation of core-factorisations via quotients (compare with \cite[Lemma 2]{FMOvan}).

\begin{lemma}
\label{core_charac}
Let $1\neq G=AB$ be the product of the subgroups $A$ and $B$. The following statements are pairwise equivalent:
\begin{enumerate}
	\item[\emph{(1)}] $G=AB$ is a core-factorisation.
	
	\item[\emph{(2)}] There exists a normal series $1=N_0 \unlhd N_1 \unlhd \cdots \unlhd N_{n-1} \unlhd N_n=G$ such that either $N_i/N_{i-1} \leqslant AN_{i-1}/N_{i-1}$ or $N_i/N_{i-1} \leqslant BN_{i-1}/N_{i-1}$, for each $1\leq i \leq n$ (i.e. $N_i/N_{i-1}$ is covered by either $A$ or $B$).
	
	\item[\emph{(3)}] For every proper normal subgroup $K$ of $G$ it holds that there exists a normal subgroup $1\neq M/K$ of $G/K$ such that either $M/K\leqslant AK/K$ or $M/K\leqslant BK/K$ (i.e. either $A$ or $B$ covers $M/K$).
\end{enumerate}  
Further, in \emph{(1)} and \emph{(2)}, each term $N_i$ of such (chief) normal series is prefactorised and $N_i=(N_i\cap A)(N_i\cap B)$ is also a core-factorisation.
\end{lemma}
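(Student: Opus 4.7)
The plan is to establish the cycle (1)$\Rightarrow$(2)$\Rightarrow$(3)$\Rightarrow$(1) and then derive the supplementary claim. The implication (1)$\Rightarrow$(2) is immediate, since a chief series with the covering property is, in particular, a normal series with that property.

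For (2)$\Rightarrow$(3), given a proper normal subgroup $K$ of $G$, I choose the smallest index $i$ with $N_i \not\leq K$, which exists because $N_n = G$. Then $N_{i-1} \leq K$, and $M := KN_i$ is a normal subgroup of $G$ with $M/K \neq 1$. If $N_i \leq AN_{i-1} \leq AK$ (the case with $B$ being analogous), then $M \leq AK$, so $M/K \leq AK/K$, as required.

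For (3)$\Rightarrow$(1), I argue by induction on $\abs{G}$. The cases $G=A$ or $G=B$ are handled by Remark (i), so I assume both factors are proper. Applying (3) with $K=1$ yields a non-trivial normal subgroup of $G$ covered by, say, $A$; any minimal normal subgroup $N_1$ of $G$ contained in it is then inside $A$, and $N_1 \neq G$. The quotient $G/N_1 = (AN_1/N_1)(BN_1/N_1)$ inherits condition (3) via the correspondence theorem, so by induction it admits a chief series with the covering property. Lifting this to $1 \lhd N_1 \lhd M_1 \lhd \cdots \lhd M_r = G$ and using the modular law gives the covering property: from $M_j \leq AM_{j-1}$ (encoded by the quotient covering), one deduces $M_j = (A \cap M_j)M_{j-1}$.

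For the ``further'' claim, I first prove prefactorisation of each $N_i$ by induction on $i$: assuming $N_{i-1} = (N_{i-1} \cap A)(N_{i-1} \cap B)$ and, say, $N_i \leq AN_{i-1}$, any $x \in N_i$ writes as $x = an$ with $a \in A$ and $n = a'b' \in (A \cap N_{i-1})(B \cap N_{i-1})$; then $aa' = xb'^{-1} \in A \cap N_i$ and $b' \in B \cap N_i$. To conclude that $N_i = (N_i \cap A)(N_i \cap B)$ is itself a core-factorisation, the modular law again shows that each factor $N_j/N_{j-1}$ of the restricted normal series $1 = N_0 \lhd N_1 \lhd \cdots \lhd N_i$ inside $N_i$ is covered by $N_i \cap A$ or $N_i \cap B$; the already-proved implication (2)$\Rightarrow$(1), applied inside $N_i$, then furnishes a chief series with the covering property. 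The main technical burden throughout is this bookkeeping of modular-law identities --- routine in each instance, but demanding care to ensure coverings transfer correctly between $G$, its quotients $G/N$, and prefactorised subgroups $N_i$.
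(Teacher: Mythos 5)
Your argument is correct, and it follows the natural route one would expect here: the cycle $(1)\Rightarrow(2)\Rightarrow(3)\Rightarrow(1)$ with the last implication by induction on $\abs{G}$ through a minimal normal subgroup inside a covered normal subgroup, plus the Dedekind-law bookkeeping for the prefactorisation and restriction claims. The paper itself states this lemma without proof (deferring to its companion paper), so there is nothing to contrast with; your write-up fills that gap correctly and with no essential omissions.
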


If we adopt the bar convention in statement (3) for the quotients over $K$, we point out that this condition means $\op{core}_{\overline{G}}(\overline{A}) \op{core}_{\overline{G}}(\overline{B})\neq 1$. This illustrates the given name for such factorisations.

Moreover, as noted in \cite[Example 2]{FMOvan}, if $N$ is an arbitrary prefactorised normal subgroup of a core-factorisation $G=AB$, then $N=(N\cap A)(N\cap B)$ may not be a core-factorisation. Nevertheless, this condition behaves well for quotients of $G$.

\begin{lemma}\emph{\cite[Lemma 1]{FMOvan}}
\label{lemacore}
Let $G=AB$ be a core-factorisation, and let $M$ be a proper normal subgroup of $G$. Then $G/M=(AM/M)(BM/M)$ is also a core-factorisation.
\end{lemma}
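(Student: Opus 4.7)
The plan is to exploit Lemma \ref{core_charac}, which recasts the core-factorisation property in terms of the existence of a normal series all of whose factors are covered by $A$ or by $B$, rather than insisting on a chief series. First I would pick, using Definition \ref{definition_core}, a chief series
\[
1=N_0 \unlhd N_1 \unlhd \cdots \unlhd N_n = G
\]
such that for every index $i$ either $N_i \leqslant AN_{i-1}$ or $N_i \leqslant BN_{i-1}$.

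Next I would push this series forward through the natural epimorphism $G \to G/M$. Writing $\overline{X}=XM/M$ for every subgroup $X\leqslant G$, the chain
\[
\overline{N_0} \leqslant \overline{N_1} \leqslant \cdots \leqslant \overline{N_n}=\overline{G}
\]
is a normal series of $\overline{G}$, once we discard any repetitions produced by the projection. Since $M$ is a proper normal subgroup, $\overline{G}\neq 1$, so this series is non-trivial and starts at the identity subgroup of $\overline{G}$. The covering condition transfers cleanly: from $N_i \leqslant AN_{i-1}$ we deduce $N_iM \leqslant AN_{i-1}M$, hence $\overline{N_i}\leqslant \overline{A}\,\overline{N_{i-1}}$, which is precisely the statement that $\overline{A}$ covers the factor $\overline{N_i}/\overline{N_{i-1}}$; the analogous implication holds for $B$.

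Applying condition (2) of Lemma \ref{core_charac} to the normal series just obtained in $\overline{G}$, it follows that $G/M=(AM/M)(BM/M)$ is a core-factorisation, as required. (The identity $G/M=(AM/M)(BM/M)$ itself is immediate from $G=AB$.) I do not anticipate any real obstacle here, since the argument is a direct transfer of the defining property across the quotient; the only minor bookkeeping is collapsing any possible repeated terms in the image series, which does not affect the covering hypothesis.
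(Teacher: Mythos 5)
Your argument is correct: pushing the chief series of $G$ forward to $G/M$ yields a normal series whose factors inherit the covering condition (since $N_i\leqslant AN_{i-1}$ gives $N_iM/M\leqslant (AM/M)(N_{i-1}M/M)$), and invoking the equivalence (1)$\Leftrightarrow$(2) of Lemma \ref{core_charac} — rather than trying to refine the image series back into a chief series — is exactly the right move, with $\overline{G}\neq 1$ guaranteed because $M$ is proper. The paper itself gives no proof of Lemma \ref{lemacore}, merely citing \cite[Lemma 1]{FMOvan}, but your argument is the natural one and matches the technique the authors use elsewhere (e.g.\ in the proof of Lemma \ref{prefact}).
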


Next we show some series constructions for core-factorisations somehow similar to the lower/upper $\pi$-series of a $\pi$-separable group. Let $G=AB$ be the product of two subgroups $A$ and $B$. We can consider $\mathfrak{C}_A(G):=\op{core}_G(A)$. Next we take $G/\mathfrak{C}_A(G)=(A/\mathfrak{C}_A(G))(B\mathfrak{C}_A(G)/\mathfrak{C}_A(G))$. Since $\op{core}_{G/\mathfrak{C}_A(G)}(A/\mathfrak{C}_A(G))=1$, then we compute \break $\op{core}_{G/\mathfrak{C}_A(G)}(B\mathfrak{C}_A(G)/\mathfrak{C}_A(G))$. Let us denote its inverse image in $G$ by $\mathfrak{C}_{A,B}(G)$, i.e. $\mathfrak{C}_{A,B}(G)/\mathfrak{C}_A(G):=\op{core}_{G/\mathfrak{C}_A(G)}(B\mathfrak{C}_A(G)/\mathfrak{C}_A(G))$. Similarly we define $\mathfrak{C}_{A,B,A}(G)$ to be the inverse image in $G$ of $\op{core}_{G/\mathfrak{C}_{A,B}(G)}(A\mathfrak{C}_{A,B}(G)/\mathfrak{C}_{A,B}(G))$. Continuing these definitions in the natural way, we can obtain a sequence of normal subgroups of $G$ $$1\unlhd \mathfrak{C}_A(G) \unlhd \mathfrak{C}_{A,B}(G) \unlhd \mathfrak{C}_{A,B,A}(G) \unlhd \cdots.$$ We call this series the \textbf{core $A$-series} of $G$. Similarly, we define the \textbf{core $B$-series} of $G$ to be $$1\unlhd \mathfrak{C}_B(G) \unlhd \mathfrak{C}_{B,A}(G) \unlhd \mathfrak{C}_{B,A,B}(G) \unlhd \cdots.$$ In a core-factorisation $G=AB$, both series terminate in $G$ in virtue of Lemmas \ref{core_charac} and \ref{lemacore}; and conversely, if one of the two series terminates in $G$, then $G=AB$ is certainly a core-factorisation. Analogously, it is possible to define a \emph{core-length} with respect to either $A$ or $B$ in the same way as the $\pi$ or $\pi'$-length of a $\pi$-separable group.

A well-known result asserts that $\rad{\pi}{G/\rad{\pi'}{G}}$ is self-centralising in $G/\rad{\pi'}{G}$, for any $\pi$-separable group $G$. The next example examines the analogous phenomenon in core-factorisations.

\begin{example}
Let $G=\op{Sym}(4) \times \langle x\rangle$, where $\op{Sym}(4)$ denotes the symmetric group of $4$ letters and $o(x)=2$. If $A=\langle ((1,2), x),\: ((3,4), x), \: ((1,3)(2,4), x)\rangle$ and $B=\langle ((2,3,4), 1), \: ((3,4), 1), \:(1, x)\rangle$, then $G=AB$ is a core-factorisation. Moreover, $\mathfrak{C}_A(G)=1$ but $\mathfrak{C}_B(G)$ is not self-centralising.
\end{example}

On the other hand, the next result on Hall $\pi$-subgroups of $\pi$-separable factorised groups is a key step for our development (indeed the $\pi$-separability condition can be relaxed to the $D_{\pi}$-property, as can be seen in \cite[1.3.2]{AMB}).

\begin{lemma}
\label{prefact}
Let the $\pi$-separable group $G = AB$ be the product of the subgroups $A$ and $B$. Then there exists a Hall $\pi$-subgroup $H$ of $G$ such that $H = (H\cap A)(H\cap B)$, with $H\cap A$ a Hall $\pi$-subgroup of $A$ and $H\cap B$ a Hall $\pi$-subgroup of $B$.

In particular, if $G=AB$ is a core-factorisation, then $H = (H\cap A)(H\cap B)$ is also a core-factorisation.
\end{lemma}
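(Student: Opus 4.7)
The first assertion is a known fact about products of $\pi$-separable groups and I would simply cite \cite[1.3.2]{AMB} (note that $\pi$-separability guarantees the $D_\pi$-property, which is all that proof really uses). The substantive content is the \emph{in particular} statement, and my plan is to transport a chief series witnessing the core-factorisation of $G$ down to $H$ by intersection. By Definition \ref{definition_core}, pick a chief series $1 = G_0 \unlhd G_1 \unlhd \cdots \unlhd G_n = G$ with each chief factor $G_i/G_{i-1}$ covered by either $A$ or $B$, and set $H_i := H \cap G_i$. The goal is to show by induction on $i$ that $H_i = (H_i \cap A)(H_i \cap B)$ and that each nontrivial section $H_i/H_{i-1}$ is covered by $H \cap A$ or $H \cap B$; once this is achieved, Lemma \ref{core_charac}(2) finishes the proof.

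For the inductive step, $\pi$-separability of $G$ forces every chief factor to be either a $\pi$-group or a $\pi'$-group. If $G_i/G_{i-1}$ is a $\pi'$-group, then $H_i G_{i-1}/G_{i-1}$ is a $\pi$-subgroup of a $\pi'$-group, hence trivial, giving $H_i = H_{i-1}$; the same reasoning applied inside the $\pi$-separable subgroups $A$ and $B$ (using $G_i \cap A \unlhd A$ and $G_i \cap B \unlhd B$) yields $H_i \cap A = H_{i-1} \cap A$ and $H_i \cap B = H_{i-1} \cap B$, so the inductive claim descends from $i-1$ to $i$. If instead $G_i/G_{i-1}$ is a $\pi$-factor covered by, say, $A$, so that $G_i = (G_i \cap A)G_{i-1}$, then $G_i \cap A \unlhd A$ together with $\pi$-separability of $A$ implies that $H \cap A \cap G_i$ is a Hall $\pi$-subgroup of $G_i \cap A$. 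Since $(G_i \cap A)/(G_i \cap A \cap G_{i-1}) \cong G_i/G_{i-1}$ is a $\pi$-group, this Hall subgroup surjects onto the quotient, and so $G_i = (H \cap A \cap G_i)G_{i-1}$. Dedekind's law then gives $H_i = (H \cap A \cap G_i)H_{i-1}$, and combining with the inductive hypothesis $H_{i-1} = (H_{i-1} \cap A)(H_{i-1} \cap B)$, the two $A$-terms collapse into $H_i \cap A$, producing $H_i = (H_i \cap A)(H_i \cap B)$ with top factor covered by $H \cap A$.

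After deleting the trivial factors coming from $\pi'$-steps, the series $\{H_i\}$ exhibits $H = (H \cap A)(H \cap B)$ as a core-factorisation via Lemma \ref{core_charac}(2). The only delicate point I expect to require care is verifying that $H \cap A \cap G_i$ is indeed a Hall $\pi$-subgroup of $G_i \cap A$; this hinges on the $\pi$-separability of $A$ and the normality of $G_i \cap A$ in $A$. Once that is in place, the rest reduces to Dedekind manipulations and a straightforward order count modulo $G_{i-1}$.
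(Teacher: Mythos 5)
Your proposal is correct and follows essentially the same route as the paper's proof: intersect a covering chief series of $G$ with $H$, dispose of the $\pi'$-factors as trivial intersections, and for a $\pi$-factor covered by $A$ use that $H\cap A\in\hall{\pi}{A}$ (so $H\cap A\cap N_i$ is a Hall $\pi$-subgroup of the normal subgroup $N_i\cap A$ of the $\pi$-separable group $A$) together with Dedekind's law to see that $H\cap A$ covers $(H\cap N_i)/(H\cap N_{i-1})$, then invoke Lemma \ref{core_charac}(2). The only difference is that you additionally carry the prefactorisation $H_i=(H_i\cap A)(H_i\cap B)$ through the induction, which is harmless but unnecessary, since the ``Further'' clause of Lemma \ref{core_charac} already yields it.
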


\begin{proof}
The first assertion is just a reformulation of \cite[1.3.2]{AMB}, so we concentrate on the second claim. We assume that $G=AB$ is a core-factorisation, and let us prove that $H = (H\cap A)(H\cap B)$ so is. There exists a chief series $1=N_0 \unlhd N_1 \unlhd \cdots \unlhd N_{n-1} \unlhd N_n=G$ such that each $N_i/N_{i-1}$ is covered by either $A$ or $B$, for all $1\leq i\leq n$. 

Note that $1=N_0 \cap H\unlhd N_1\cap H \unlhd \cdots \unlhd N_{n-1} \cap H \unlhd N_n\cap H=H$ is a normal series of $H$. We claim that each $(N_i \cap H)/(N_{i-1} \cap H)$ is covered by either $H\cap A$ or $H\cap B$, in order to apply Lemma \ref{core_charac}. Since $G$ is $\pi$-separable, then $N_i/N_{i-1}$ is either a $\pi$-group or a $\pi'$-group. In the latter case, we easily get that $(N_i \cap H)/(N_{i-1} \cap H)=1$ is clearly covered by either $H\cap A$ or $H\cap B$. If $N_i/N_{i-1}$ is a $\pi$-group, since we may assume for instance that $N_i/N_{i-1}$ is covered by $A$, then $N_i/N_{i-1}\leqslant (H\cap A)N_{i-1}/N_{i-1}$ because $H\cap A\in\hall{\pi}{A}$. Now $N_i=N_{i-1}(N_i \cap H\cap A)$ and $H\cap N_i  = H\cap N_{i-1}(N_i \cap H\cap A)=(N_i \cap H\cap A)(H\cap N_{i-1})\leqslant (H\cap A)(H\cap N_{i-1})$. Thus $(H\cap N_i)/(H\cap N_{i-1})$ is covered by $H\cap A$ and we are done.
\end{proof}


\section{On conjugacy class sizes of prime power order \texorpdfstring{$\pi$}{pi}-elements}
\label{ppo-pi-elements}

Along this section, we will consider products of groups and prime power order $\pi$-elements in the factors whose class sizes are $\pi$-numbers. First of all, we start by presenting some preliminary results. The next elementary properties are used frequently and without further reference.

\begin{lemma}
Let $N$ be a normal subgroup of a group $G$, and $A$ be a subgroup of $G$. We have:
\begin{itemize}
	\item[\emph{(a)}] $\abs{x^N}$ divides $\abs{x^G}$, for any $x\in N$.
	
	\item[\emph{(b)}] $\abs{(xN)^{G/N}}$ divides $\abs{x^G}$, for any $x\in G$.
	
	\item[\emph{(c)}] If $xN$ is a $\pi$-element of $AN/N$, then there exists a $\pi$-element $x_1\in A$ such that $xN = x_1N$.
\end{itemize}
\end{lemma}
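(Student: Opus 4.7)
The three statements are textbook facts about conjugacy class sizes and $\pi$-decompositions, so the plan is to handle each via the standard centraliser manipulations, with no real surprises.

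For (a), I would simply observe that $\ce{N}{x}=N\cap\ce{G}{x}$, so by the second isomorphism theorem
$$|x^N| = [N:N\cap\ce{G}{x}] = [N\ce{G}{x}:\ce{G}{x}],$$
and this is a divisor of $[G:\ce{G}{x}]=|x^G|$ because $N\ce{G}{x}\leqslant G$. For (b), I would note that the canonical projection $G\to G/N$ carries $\ce{G}{x}$ into $\ce{G/N}{xN}$, hence $\ce{G}{x}N/N\leqslant\ce{G/N}{xN}$. Therefore
$$|(xN)^{G/N}|=[G/N:\ce{G/N}{xN}]$$
divides $[G/N:\ce{G}{x}N/N]=[G:\ce{G}{x}N]$, which in turn divides $[G:\ce{G}{x}]=|x^G|$.

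For (c), the plan is to first pick $a\in A$ with $aN=xN$ (possible since $xN\in AN/N$), and then exploit the unique decomposition $a=a_\pi a_{\pi'}$ into commuting $\pi$- and $\pi'$-parts. Crucially, both $a_\pi$ and $a_{\pi'}$ are powers of $a$, so they belong to $A$. Passing to $G/N$, the element $aN=(a_\pi N)(a_{\pi'} N)$ is a $\pi$-element, while $a_{\pi'} N$ is a $\pi'$-element commuting with the $\pi$-element $a_\pi N$. The uniqueness of the $\pi$-decomposition in $G/N$ then forces $a_{\pi'} N = 1$, so that $x_1 := a_\pi$ is a $\pi$-element of $A$ with $x_1 N = aN = xN$.

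There is no genuine obstacle; the only point requiring minor care is part (c), where one must remember that the $\pi$-part and $\pi'$-part of an element are themselves powers of that element (so they stay inside $A$), and one must invoke the uniqueness of the $\pi$-decomposition in the quotient $G/N$ rather than trying to lift it directly from $G$.
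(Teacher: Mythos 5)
Your proof is correct; the paper states this lemma without proof as a collection of elementary facts, and your arguments (the product formula $|x^N|=[N\ce{G}{x}:\ce{G}{x}]$ for (a), the inclusion $\ce{G}{x}N/N\leqslant\ce{G/N}{xN}$ for (b), and the uniqueness of the $\pi$-decomposition $a=a_\pi a_{\pi'}$ into commuting powers of $a$ for (c)) are exactly the standard ones the authors implicitly rely on.
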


The next observation is crucial in the sequel.

\begin{remark}
\label{values}
Let $G=AB$ be a $\pi$-separable group. Consider by Lemma \ref{prefact} a Hall $\pi$-subgroup $H = (H\cap A)(H\cap B)$ of $G$ such that $H\cap A\in\hall{\pi}{A}$ and $H\cap B\in\hall{\pi}{B}$. Then, imposing arithmetical conditions on the class sizes of the (prime power order) $\pi$-elements in $A\cup B$ is equivalent to impose them on the class sizes of the (prime power order) elements in $(H\cap A)\cup(H\cap B)$, because of the conjugacy of Hall $\pi$-subgroups.
\end{remark}

The lemma below is a transcription of a well-known Wielandt's result for a set of primes $\pi$.

\begin{lemma}\emph{\cite[Lemma 1]{BK}}
\label{wielandt}
Let $G$ be a group and $H\in\hall{\pi}{G}$. If $\abs{x^G}$ is a $\pi$-number for some $x\in H$, then $x\in\op{O}_{\pi}(G)$.
\end{lemma}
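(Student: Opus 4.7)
The plan is to show that the entire conjugacy class $x^G$ is already contained in $H$; once this is established, the normal closure $N = \langle x^G \rangle$ is a normal $\pi$-subgroup of $G$ and therefore sits inside $\rad{\pi}{G}$, whence $x \in N \leqslant \rad{\pi}{G}$.

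The crucial step is thus the factorisation $G = \ce{G}{x} H$. The hypothesis that $\abs{x^G} = \abs{G : \ce{G}{x}}$ is a $\pi$-number says that $\abs{G}_{\pi'}$ divides $\abs{\ce{G}{x}}$, and in particular $\abs{\ce{G}{x}}_{\pi'} = \abs{G}_{\pi'}$. Applying $|HK| = \abs{H}\abs{K}/\abs{H\cap K} \leqslant \abs{G}$ with $K = \ce{G}{x}$ yields $\abs{H \cap \ce{G}{x}} \geqslant \abs{H}\abs{\ce{G}{x}}/\abs{G} = \abs{\ce{G}{x}}_\pi$; but $H \cap \ce{G}{x}$ is a $\pi$-subgroup of $\ce{G}{x}$, so equality holds, and then $\abs{H \ce{G}{x}} = \abs{G}$, i.e.\ $G = \ce{G}{x} H$. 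With this factorisation at hand, any $g \in G$ may be written as $g = ch$ with $c \in \ce{G}{x}$ and $h \in H$, and then
$$x^g = x^{ch} = (x^c)^h = x^h \in H,$$
so that $x^G \subseteq H$, as required.

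I expect the only technical point in the whole argument to be the elementary counting that yields $G = \ce{G}{x}H$; once this is in place, the conclusion follows at once from the standard conjugation identity (together with the fact that $c$ centralises $x$) and the very definition of $\rad{\pi}{G}$ as the largest normal $\pi$-subgroup of $G$.
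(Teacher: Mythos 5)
Your argument is correct and complete: the counting step does give $\abs{H\cap\ce{G}{x}}=\abs{\ce{G}{x}}_{\pi}$ and hence $G=\ce{G}{x}H$, from which $x^G\subseteq H$ and $x\in\langle x^G\rangle\leqslant\rad{\pi}{G}$ follow exactly as you say. The paper does not prove this lemma at all --- it is quoted from \cite[Lemma 1]{BK} as a transcription of Wielandt's result --- and your proof is precisely the standard argument behind that citation, so nothing further is needed.
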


Indeed, we can provide a useful $\pi$-separability criterion for factorised groups having a Hall $\pi$-subgroup by means of the previous class size condition.

\begin{lemma}
\label{lemma_separability}
Assume that $G=AB$ with $\hall{\pi}{G}\neq \emptyset$. If $\abs{x^G}$ is a $\pi$-number for every $\pi$-element $x\in A\cup B$ of prime power order, then $\rad{\pi}{G}\in\hall{\pi}{G}$. In particular, $G$ is $\pi$-separable.
\end{lemma}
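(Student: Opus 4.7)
The goal is to establish that $\rad{\pi}{G}$ is itself a Hall $\pi$-subgroup of $G$; once this is done, the normal series $1 \unlhd \rad{\pi}{G} \unlhd G$ has a $\pi$-factor and a $\pi'$-factor, giving $\pi$-separability for free. Since $\rad{\pi}{G}$ is by definition a $\pi$-group, it suffices to verify that $\rad{\pi}{G}$ contains a Sylow $p$-subgroup of $G$ for every prime $p \in \pi$.

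The central step is to show that every $p$-element of $A \cup B$ (with $p \in \pi$) lies in $\rad{\pi}{G}$. By hypothesis any such element $x$ satisfies that $\abs{x^G}$ is a $\pi$-number, so by Lemma \ref{wielandt} it is enough to place $x$ inside some Hall $\pi$-subgroup of $G$. This is where the given $H \in \hall{\pi}{G}$ plays its role: $x$ belongs to some Sylow $p$-subgroup $P$ of $G$, and $H$ contains a Sylow $p$-subgroup $P_0$ of $G$ because $p \in \pi$. By Sylow's theorem, $P = P_0^{g^{-1}}$ for some $g \in G$, so $x \in P \leqslant H^{g^{-1}} \in \hall{\pi}{G}$, and Lemma \ref{wielandt} then yields $x \in \rad{\pi}{G}$.

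To finish, I would invoke the classical theorem of Kegel on factorised Sylow subgroups (see, e.g., \cite{AMB}): for each prime $p$ there exist $A_p \in \syl{p}{A}$ and $B_p \in \syl{p}{B}$ with $A_p B_p \in \syl{p}{G}$. For $p \in \pi$, all elements of $A_p$ and of $B_p$ are $p$-elements of $A \cup B$, hence lie in $\rad{\pi}{G}$ by the previous step. Therefore $A_p B_p \subseteq \rad{\pi}{G}$, so $\rad{\pi}{G}$ contains a Sylow $p$-subgroup of $G$ for every $p \in \pi$, and the equality $\rad{\pi}{G} \in \hall{\pi}{G}$ follows.

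The subtle point, and the main obstacle I anticipate, is the absence of $\pi$-separability (or even the $D_\pi$-property) at the start: without it, Lemma \ref{prefact} is not at our disposal, and one cannot directly claim that prime power $\pi$-elements of the factors sit inside a Hall $\pi$-subgroup of $G$. The Sylow-conjugation argument in the second paragraph circumvents this, relying only on the bare assumption that $\hall{\pi}{G}$ is non-empty, together with Kegel's factorisation of Sylow subgroups in a product.
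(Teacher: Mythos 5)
Your proposal is correct and follows essentially the same route as the paper: both arguments conjugate a prefactorised Sylow $p$-subgroup $A_pB_p\in\syl{p}{G}$ (for $p\in\pi$) into a Hall $\pi$-subgroup and then apply Lemma \ref{wielandt} to place its generating prime power order elements of $A\cup B$ inside $\rad{\pi}{G}$, concluding $\rad{\pi}{G}\in\hall{\pi}{G}$. The only cosmetic difference is that you first show every $p$-element of $A\cup B$ lies in $\rad{\pi}{G}$ and then specialise to $A_p\cup B_p$, whereas the paper works directly with the prefactorised Sylow subgroup from the start.
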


\begin{proof}
Let $H\in\hall{\pi}{G}$. We choose $P=(P\cap A)(P\cap B)\in\syl{p}{G}$ for some $p\in\pi$. Clearly $P$ is $G$-conjugate to a Sylow $p$-subgroup of $H$, say $P_1$. Hence for some $g\in G$ we have $P= P_1^g\leqslant H^g\in\hall{\pi}{G}$. It follows by Lemma \ref{wielandt} that if $x\in (P\cap A)\cup (P\cap B)$, then $x\in\rad{\pi}{G}$, so $(P\cap A)\cup (P\cap B)\subseteq \rad{\pi}{G}$. Thus $P^h\leqslant\rad{\pi}{G}$ for all $h\in G$. Since this is valid for all $p\in\pi$, we deduce that $H=\rad{\pi}{G}$. The second claim follows directly.
\end{proof}

\medskip

Further, under the additional assumption of being a core-factorisation, we get as our first main result a characterisation of the $\pi$-decomposability of such a factorised group. Our proof involves the following lemma, which makes use of the knowledge on the automorphism groups of the non-abelian simple groups.

\begin{lemma}\emph{\cite[Lemma 2.6]{DPSS}}
Let $S$ be a simple group. Then there exists $r\in \pi(S)$ such that $\op{gcd}(r, \abs{\ce{S}{\alpha}})=1$ for every non-trivial $\alpha \in \op{Aut}(S)$ of order coprime to $S$.
\label{aut_lemma}
\end{lemma}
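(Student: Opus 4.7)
The plan is to invoke the CFSG and treat each family of simple groups in turn. If $S$ is abelian simple, then $S\cong C_p$ for some prime $p$ and every non-trivial $\alpha\in\op{Aut}(S)$ is fixed-point-free on $C_p$, so $\ce{S}{\alpha}=1$ and $r=p$ works. Now assume $S$ is non-abelian, so $\abs{S}$ is even by the Feit--Thompson theorem. Any non-trivial inner automorphism of $S$ has order dividing $\abs{S}$, so the hypothesis forces $\alpha$ to be genuinely outer; moreover, since the order of $\alpha$ is coprime to $\abs{\op{Inn}(S)}=\abs{S}$, the image of $\alpha$ in $\op{Out}(S)$ retains the same (odd) order. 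For $S$ alternating with $n\geqslant 5$ or $S$ sporadic, one has $\abs{\op{Out}(S)}\in\{1,2,4\}$, so no non-trivial $\alpha$ of order coprime to $\abs{S}$ can exist: the conclusion holds vacuously with any $r\in\pi(S)$.

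The substantial case is $S$ of Lie type over $\mathbb{F}_q$ with $q=p^f$. By Steinberg's description, outer automorphisms of $S$ factor as products of diagonal, field and graph automorphisms; the diagonal ones have order dividing the order of the fundamental group (whose prime divisors lie in $\pi(S)$) and graph automorphisms have order $2$ or $3$ (also in $\pi(S)$). Hence any $\alpha$ whose order is coprime to $\abs{S}$ must, after conjugation by an inner-diagonal element, act as a non-trivial field automorphism of some order $m$ with $\gcd(m,p)=1$. Its fixed-point subgroup $\ce{S}{\alpha}$ is then (up to small index) a group of the same Lie type over the proper subfield $\mathbb{F}_{q_0}$ with $q_0^m=q$, whose order is a product of expressions of the form $q_0^a(q_0^i\pm 1)$. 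I would then apply Zsigmondy's theorem to produce a primitive prime divisor $r$ of $q^N-1$ for a suitable Lie invariant $N$ of $S$ (the Coxeter number typically suffices): by primitivity $r$ divides $\abs{S}$ but is coprime to $q_0^i-1$ for every $i<N$, and hence $r\nmid\abs{\ce{S}{\alpha}}$.

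The hard part will be the finitely many small configurations in which Zsigmondy's theorem fails (notably $q=2$ with $N=6$, and the Mersenne-prime exceptions), together with the twisted Suzuki and Ree families whose orders involve factors such as $q\pm\sqrt{2q}$ or $q\pm\sqrt{3q}$; these finite exceptions would need direct verification using the explicit tables of centralisers of field automorphisms available in the standard structural references for finite simple groups.
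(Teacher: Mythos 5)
The paper does not actually prove this statement: it is imported verbatim from the cited reference [DPSS, Lemma 2.6], so there is no internal proof to measure your attempt against. Your outline follows the route one would expect for such a result (and that the source essentially takes): handle $C_p$, use Feit--Thompson to make the order of $\alpha$ odd and its image in $\op{Out}(S)$ of the same order, kill the alternating and sporadic cases by the size of $\op{Out}(S)$, reduce Lie-type automorphisms of order coprime to $\abs{S}$ to field automorphisms, and exclude the subfield centraliser with a Zsigmondy prime. Those first reductions are correct as written.

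The gap is in the Zsigmondy step. A primitive prime divisor $r$ of $q^N-1$ \emph{with respect to the base $q$} (meaning $r\nmid q^j-1$ for $j<N$) need not be coprime to $q_0^i-1$ for $i\leq N$, because $q_0$ is a power of $p$ that is not a power of $q$; primitivity over $q$ says nothing about intermediate powers of $p$. For instance, with $q=4$ and $N=3$ the only base-$4$ primitive prime divisor of $q^3-1=63$ is $7$, yet $7=2^3-1$ divides $\abs{\op{PSL}_3(2)}=\abs{\ce{\op{PSL}_3(4)}{\phi}}$ for the field automorphism $\phi$ of order $2$ --- so the inference ``primitivity $\Rightarrow$ $r\nmid q_0^i-1$'' is simply false as stated (this particular $\phi$ is not covered by the lemma's hypothesis, but it exposes the arithmetic error). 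What you need is a primitive prime divisor of $p^{fN}-1$ with respect to the characteristic $p$, where $q=p^f$: such an $r$ divides $\Phi_{fN}(p)\mid\Phi_N(q)\mid\abs{S}$ and is coprime to $q_0^i-1=p^{if/m}-1$ for all $i\leq N$ and all $m\geq 2$, uniformly over every proper subfield at once --- uniformity matters, since a single $r$ must work for all $\alpha$ simultaneously. Relatedly, the twisted families are not ``finitely many small configurations'': unitary, Suzuki and Ree centralisers have order formulas involving $q_0^i+1$, so the primitivity degree must be adjusted there, and the Zsigmondy exceptions plus the conjugation of $\alpha$ into a standard field automorphism are deferred to references rather than argued. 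As it stands the Lie-type case is a plan, not a proof; since the result is in any case available in the literature, citing [DPSS, Lemma 2.6], as the paper does, is the appropriate course.
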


\begin{theoremletters}
\label{pi-decompo}
Let $G=AB$ be a core-factorisation such that $\hall{\pi}{G}\neq \emptyset$. Then:
\begin{enumerate}
	\item[\emph{(1)}] Each $\abs{x^G}$ is a $\pi$-number for every $\pi$-element $x\in A\cup B$ of prime power order if and only if $G$ is $\pi$-decomposable.
	
	\item[\emph{(2)}] Each $\abs{x^G}$ is a $\pi$-number for every prime power order element $x\in A\cup B$ if and only if $G$ is $\pi$-decomposable and its Hall $\pi'$-subgroup is abelian.
\end{enumerate}
\end{theoremletters}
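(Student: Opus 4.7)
For the sufficiency of (1), if $G = \rad{\pi}{G} \times \rad{\pi'}{G}$ then every $\pi$-element lies in $\rad{\pi}{G}$, which is centralised by $\rad{\pi'}{G}$, so $|x^G|$ divides $|\rad{\pi}{G}|$ and is a $\pi$-number. For necessity, Lemma \ref{lemma_separability} gives that $G$ is $\pi$-separable with $H := \rad{\pi}{G} \in \hall{\pi}{G}$, and the task reduces to producing a normal Hall $\pi'$-complement of $H$ in $G$. I argue by induction on $|G|$. The core-factorisation property furnishes a minimal normal subgroup $N$ of $G$ contained in one factor, say $N \leqslant A$. By Lemma \ref{lemacore} the quotient $G/N$ is again a core-factorisation, has a Hall $\pi$-subgroup $HN/N$, and the class-size hypothesis descends (taking $p$-parts of $\pi$-element preimages gives prime-power-order $\pi$-elements of $A \cup B$ with the same image). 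By induction $G/N$ is $\pi$-decomposable, say $G/N = (HN/N) \times (M/N)$ with $M/N = \rad{\pi'}{G/N}$.

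If $N$ is a $\pi'$-group then $M$ is a normal $\pi'$-subgroup of $G$ of index $|H|$, hence a normal Hall $\pi'$-subgroup, and $G = H \times M$ follows at once. Suppose henceforth $N$ is a $\pi$-group, so $N \leqslant H$; Schur--Zassenhaus produces a complement $L$ of $N$ in $M$, which is a Hall $\pi'$-subgroup of $G$, and then $G = HL = H \rtimes L$. The decomposition of $G/N$ forces $[L, H] \leqslant N$, and the crux is to strengthen this to $[L, H] = 1$. For this, every prime-power-order element of $N$ is a $\pi$-element of $A$ and so, by hypothesis, has $\pi$-number $G$-class size; as $N$ is generated by its prime-power-order elements, $|G : \ce{G}{N}|$ divides a product of $\pi$-numbers and is itself a $\pi$-number. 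Thus $\ce{G}{N}$ contains a Hall $\pi'$-subgroup of $G$, and by conjugacy of Hall $\pi'$-subgroups in the $\pi$-separable group $G$ I may replace $L$ by a conjugate lying in $\ce{G}{N}$ (the relation $[L, H] \leqslant N$ is preserved since $N$ is normal). Then $L$ acts trivially on both $N$ and $H/N$; since the stability group of the chain $1 \leqslant N \leqslant H$ in $H$ is a $\pi(H)$-group and $L$ is $\pi'$, the induced action of $L$ on $H$ is trivial, so $G = H \times L$, contradicting the minimality of $G$.

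The main obstacle is precisely this final centraliser-plus-stability step: the core-factorisation placement of $N$ inside a factor is crucial, for it is what makes the class-size hypothesis applicable to every element of $N$ and so bounds $|G : \ce{G}{N}|$ by a $\pi$-number. Part (2) follows from (1). The implication $(\Leftarrow)$ is immediate: in $G = H \times L$ with $L$ abelian, every $\pi'$-element is central and every $\pi$-element has class size dividing $|H|$. For $(\Rightarrow)$, (1) yields $G = H \times L$ with $L = \rad{\pi'}{G}$, and Lemma \ref{prefact} applied to $\pi'$ gives $L = (L \cap A)(L \cap B)$ with $L \cap A$ and $L \cap B$ Hall $\pi'$-subgroups of $A$ and $B$. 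For a prime-power-order element $y \in (L \cap A) \cup (L \cap B)$ one has $\ce{G}{y} = H \times \ce{L}{y}$, so $|y^G| = |L : \ce{L}{y}|$ is a $\pi'$-number; the hypothesis forces it also to be a $\pi$-number, hence $|y^G| = 1$ and $y \in \ze{L}$. Since each of $L \cap A$ and $L \cap B$ is generated by its prime-power-order elements, $L = (L \cap A)(L \cap B) \leqslant \ze{L}$, so $L$ is abelian.
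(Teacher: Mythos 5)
There is a genuine gap in your proof of the necessity in (1), at the step where you claim that, because $N$ is generated by its prime-power-order elements and each such element $n$ has $\abs{n^G}$ a $\pi$-number, the index $\abs{G:\ce{G}{N}}$ ``divides a product of $\pi$-numbers and is itself a $\pi$-number.'' The index of an intersection of subgroups is bounded above by the product of the indices, but it neither divides that product nor has its prime divisors contained among theirs: already in $\op{Sym}(3)$ the centralisers of $(1,2)$ and $(1,3)$ each have index $3$, yet their intersection has index $6$. Equivalently, each $\ce{G}{n_i}$ contains \emph{some} Hall $\pi'$-subgroup of $G$, but possibly different ones for different $i$, and nothing guarantees a common one inside $\ce{G}{N}$. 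This is precisely the hard case of the theorem: when the $\pi$-chief factor $N$ is non-abelian (a product of isomorphic non-abelian simple $\pi$-groups acted on coprimely by a $\pi'$-element), ruling out the configuration in which every prime-power-order element of $N$ meets the centraliser of the acting element requires the CFSG-dependent Lemma \ref{aut_lemma} on automorphisms of simple groups; the paper's proof reduces to $G=N\langle y\rangle$ with $N$ simple exactly in order to invoke it. Your argument, if valid, would prove the result with no CFSG input at all, which contradicts the paper's own remark that the classification can only be avoided when \emph{all} $\pi$-elements (not merely those of prime power order) are assumed to have $\pi$-number class sizes.

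The rest of your argument is sound and close in spirit to the paper's (minimal configuration, reduction to a coprime action on a minimal normal subgroup contained in a factor). In particular, when $N$ is abelian your strategy can be completed without the flawed step: $[N,L]$ is normal in $G$, so by minimality either $[N,L]=1$ (done) or $\ce{N}{L}=1$, and then the class-size hypothesis applied to a single non-trivial $n\in N\leqslant A$ forces a conjugate of $n$ into $\ce{N}{L}=1$, a contradiction — which is essentially what the paper does. Your Case~1 (the $\pi'$-case), the stability-group/coprime-action step deducing $[H,L]=1$ from $[H,L]\leqslant N$ and $[N,L]=1$, and your deduction of part (2) from part (1) are all correct. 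But the non-abelian $\pi$-case is not covered, so the proof as written does not go through.
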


\begin{proof}
(1) The sufficient condition is clear. Let us prove that $G=AB$ is $\pi$-decomposable whenever every $\abs{x^G}$ is a $\pi$-number for each $\pi$-element $x\in A\cup B$ of prime power order.  Take $G$ a minimal counterexample to the assertion. In virtue of Lemma \ref{lemma_separability} we can affirm that $H:=\rad{\pi}{G}\in\hall{\pi}{G}$, so $G$ is $\pi$-separable. Applying Lemma \ref{prefact}, we can choose $F\in \hall{\pi'}{G}$ prefactorised. Take $y\in F\cap A$. We claim that $G_y:=H\langle y \rangle$ satisfies the hypotheses of the theorem. We have $$ G_y=\langle y \rangle (H\cap A)(H\cap B)\subseteq (G_y\cap A)(G_y\cap B)\subseteq G_y,$$ so $G_y$ is prefactorised and $\hall{\pi}{G_y}=\{H\}\neq \emptyset$. Now we take $p\in \pi$ and $P$ a prefactorised Sylow $p$-subgroup of $G_y$ as in Lemma \ref{prefact}. Any element $x\in P\cap G_y\cap A=P\cap A$ has $\abs{x^G}$ a $\pi$-number. Hence, there exists $g\in G$ such that $F^g\leqslant \ce{G}{x}$. We can assume $g\in H$ because $G=HF$. Since $\langle y \rangle \leqslant F$, we get $\langle y \rangle^g \leqslant \ce{G_y}{x}$, so $\abs{x^{G_y}}$ is a $\pi$-number. This is analogously valid for the elements in $P\cap G_y\cap B=P\cap B$, and for all $p\in \pi$. Now Remark \ref{values} provide that all $\pi$-elements in $(G_y\cap A)\cup(G_y\cap B)$ of prime power order have conjugacy class size in $G_y$ a $\pi$-number. It remains to show that $G_y=(G_y\cap A)(G_y\cap B)$ is a core-factorisation. If we reproduce the techniques in the proof of Lemma \ref{prefact}, we get that $1=N_0 \cap H\unlhd N_1\cap H \unlhd \cdots \unlhd N_{n-1} \cap H \unlhd N_n\cap H=H$ is a normal series of $H$ such that each $(N_i \cap H)/(N_{i-1} \cap H)$ is covered by either $H\cap A\leqslant G_y\cap A$ or $H\cap B\leqslant G_y\cap B$. But $H$ and all the $N_i$ are normal in $G$, so $1=N_0 \cap H\unlhd N_1\cap H \unlhd \cdots \unlhd N_{n-1} \cap H \unlhd N_n\cap H=H\unlhd H\langle y\rangle=G_y$ is a normal series of $G_y$. Moreover, $G_y/H$ is clearly covered by $G_y\cap A$ because $y\in G_y\cap A$. Thus all the factors are covered by either $G_y\cap A$ or $G_y \cap B$ and  $G_y=(G_y\cap A)(G_y\cap B)$ is a core-factorisation by Lemma \ref{core_charac}. If $G_y<G$, then it follows by minimality that $H\leqslant \ce{G}{y}$. Hence, we can suppose for some $y\in(F\cap A)\cup(F\cap B)$ that $G=H\langle y \rangle$; otherwise $H\leqslant \ce{G}{F}$, a contradiction. Indeed, by the decomposition of $y$ as product of prime power order elements, the same arguments apply and we can assume that $\op{o}(y)$ is a $q$-number for some prime number $q\in \pi'$. 

Since the hypotheses are inherited by quotients of $G$, and the class of $\pi$-decomposable groups is a saturated formation, we may assume $\fra{G}=1$ and that there exists a unique minimal normal subgroup $N$ of $G$, so $N\leqslant H$. Thus $\rad{\pi'}{G}=1$. As $G/N$ is $\pi$-decomposable, then $\langle y \rangle N\unlhd G$, and $[H, \langle y\rangle] =[H, y]\leqslant N$. Moreover, by coprime action we get $H = [H, y]\ce{H}{y}\leqslant N\ce{H}{y}$, so $G=H\langle y \rangle=N\ce{G}{y}$. 

We claim that $G=N\langle y \rangle$. Set $T:=N\langle y \rangle$ and, contrariwise, we assume that $T<G$. Note that $\hall{\pi}{T}=\{N\}\neq\emptyset$. Since $G=AB$ is a core-factorisation and $N$ is the unique minimal normal subgroup of $G$, we may suppose that $N\leqslant A$. As $y\in (F\cap A) \cup (F\cap B)$, then clearly $T=(T\cap A)(T\cap B)$. If we consider the normal series $1\unlhd N \unlhd N\langle y\rangle=T$, then the factors are covered by either $T\cap A$ or $T\cap B$ and $T=(T\cap A)(T\cap B)$ is a core-factorisation by Lemma \ref{core_charac}. Moreover, the class size conditions are inherited by $T$ because it is prefactorised and normal in $G$. By minimality we obtain that $N\leqslant \ce{G}{y}$ and $G=N\ce{G}{y}=\ce{G}{y}$, which leads to a contradiction. Therefore, $G=N\langle y\rangle$.

Next we demonstrate that $N$ is non-abelian. Otherwise $N=\ce{G}{N}$ because of standard group theoretic arguments (\cite[Theorem A - 10.6]{DH}). By coprime action we get $N=[N, y]\times \ce{N}{y}$. But $\ce{N}{y}$ is normal in $G$ and $N=\ce{G}{N}$, so necessarily $\ce{N}{y}=1$. Since $N$ is $t$-elementary abelian for some prime $t\in \pi$, any non-trivial element $n\in N\leqslant A$ satisfies that $\abs{n^G}$ is a $\pi$-number, so a $G$-conjugate of $n$ lies in $\ce{N}{y}=1$, a contradiction.

Thus $N$ is non-abelian and we can write $N=L_1 \times \cdots \times L_k$ where all $L_i$ are isomorphic non-abelian simple groups and they form a full $G$-conjugacy class of subgroups. Since $G=N\langle y \rangle$, then $\langle y \rangle $ acts transitively on $\{ L_1, \ldots , L_k\}$. As $\langle y\rangle$ is a $q$-group with $q\in\pi'$, if $k>1$, then we get a contradiction because $\abs{N}=k\abs{L_1}$ and $N$ is a $\pi$-group. It follows that $k=1$ and $N$ is simple. Now we can apply Lemma \ref{aut_lemma} in order to affirm that there exists a prime $s\in \pi(N)$ such that $s$ does not divide $\abs{\ce{N}{y}}$. Let $x$ be a non-trivial $s$-element in $N\leqslant A$. Since by hypotheses there is a conjugate of $x$ which lies in $\ce{N}{y}=1$, we have reached the final contradiction. The proof of (1) is now completed.

(2) It is enough to show the necessity condition. Assume that $\abs{x^G}$ is a $\pi$-number for every prime power order element $x\in A\cup B$. Clearly, $G$ is $\pi$-decomposable by (1). Moreover, its unique Hall $\pi'$-subgroup $\rad{\pi'}{G}$ is prefactorised by Lemma \ref{prefact}. Since $\rad{\pi'}{G}\cap A$ and $\rad{\pi'}{G}\cap B$ are generated by prime power order elements, all of which lying in $\ze{\rad{\pi'}{G}}$ due to the class size assumptions, then $\rad{\pi'}{G}$ is abelian.
\end{proof}

\medskip

A question which remains open is whether the hypothesis of being a core-factorisation in Theorem \ref{pi-decompo} can be eliminated. Moreover, when we consider the trivial factorisation $G=A=B$ in the above theorem, we retrieve the next result in \cite{ZGS}. In fact, our arguments provide an alternative proof. We remark that the proof given in that paper uses deeply the CFSG via a result due to Fein, Kantor and Schacher (see \cite[Lemma 2]{ZGS}).

\begin{corollary}\emph{\cite[Theorem 3.1]{ZGS}}
Let $G$ be a group with $\hall{\pi}{G}\neq \emptyset$. Then each $\abs{x^G}$ is a $\pi$-number for every $\pi$-element $x\in G$ of prime power order if and only if $G$ is $\pi$-decomposable.
\label{pi-decompo_ZGS}
\end{corollary}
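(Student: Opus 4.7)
The plan is to obtain Corollary \ref{pi-decompo_ZGS} as an immediate specialisation of Theorem \ref{pi-decompo}(1) by considering the trivial factorisation $G = A = B$. First, I would verify that this is indeed a core-factorisation in the sense of Definition \ref{definition_core}: since $G = A$ trivially, Remark~(i) in Section \ref{sec_core} guarantees that $G = G\cdot G$ qualifies as a core-factorisation (any chief series of $G$ has all its factors covered by $A=G$). Of course, the hypothesis $\hall{\pi}{G}\neq\emptyset$ is preserved.

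The sufficiency direction is already trivial (a $\pi$-decomposable group has its $\pi$-elements centralised by the Hall $\pi'$-subgroup, so their $G$-class sizes divide the order of the Hall $\pi$-subgroup). For the necessity direction, I would observe that under the identification $A = B = G$, the set $A \cup B$ coincides with $G$, so the hypothesis ``$|x^G|$ is a $\pi$-number for every $\pi$-element $x\in A\cup B$ of prime power order'' is literally the hypothesis of the corollary. Applying Theorem \ref{pi-decompo}(1) then yields directly that $G$ is $\pi$-decomposable.

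Since the corollary is only a reformulation of a special case, there is no genuine obstacle to overcome here: all the work has already been carried out in the proof of Theorem \ref{pi-decompo}(1), where the use of Lemma \ref{aut_lemma} (which depends on the CFSG through the automorphism groups of non-abelian simple groups) was the crucial ingredient replacing the Fein--Kantor--Schacher theorem invoked in \cite{ZGS}. The proof of the corollary itself is therefore a one-line application of the theorem, and the point worth emphasising in the write-up is precisely that our route offers an alternative, CFSG-lighter derivation of \cite[Theorem 3.1]{ZGS}.
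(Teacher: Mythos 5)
Your proposal is correct and matches the paper exactly: the paper derives Corollary \ref{pi-decompo_ZGS} precisely by taking the trivial factorisation $G=A=B$ in Theorem \ref{pi-decompo}(1), noting as you do that this is a core-factorisation and that the hypotheses coincide. The only minor caveat is that calling the route ``CFSG-lighter'' is a slight overstatement, since Lemma \ref{aut_lemma} still rests on the CFSG; the paper only claims an \emph{alternative} proof, reserving the CFSG-free statement for the case where all $\pi$-elements (not just those of prime power order) are considered.
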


\begin{remark}
Actually, when all the $\pi$-elements are considered in the above result (not only those of prime power order), then the CFSG can be avoided (see either \cite[Supplement to Theorem 1]{BK} or Lemma \ref{BKlemma} below).
\end{remark}

Zhao \emph{et al.} also provided in \cite[Theorem 3.2]{ZGS} a similar characterisation to the one in Theorem \ref{pi-decompo}, but considering a factorised group $G=AB$ with one factor which is subnormal. It is worth to remark that, if $A$ is subnormal, then for every element $x\in A$ it holds that $\abs{x^A}$ divides $\abs{x^G}$, although in general this is not the case. Besides, there exists a normal subgroup of $G$ which contains $A$, so this normal subgroup is prefactorised.

\begin{example}
Notice that, a priori, groups with a core-factorisation and factorised groups with one subnormal factor are not related. For instance, let $G$ be the natural wreath product of a symmetric group of degree $3$ and a cyclic group $\langle z \rangle$ of order $2$. If we take $A =\langle (2,3), (1,2,3)^z, (2,3)^z\rangle$ and $B = \langle (1,3,2)(4,5,6)^z, (1,3,2)(4,5,6)^zz\rangle$, then $G = AB$ is not a core-factorisation and $B$ is subnormal in $G$. On the other hand, it is not difficult to find core-factorisations where the factors are neither subnormal in the whole group nor mutually permutable (\cite[Example 2]{FMOvan}).
\end{example}

Next, we deal with the dual condition on the class sizes of prime power order $\pi$-elements, i.e. when they are not divisible by any prime in $\pi$. We characterise arbitrary factorisations of $\pi$-separable groups which have abelian Hall $\pi$-subgroups through elementary reasonaments.

\begin{proposition}
\label{pi'}
Let $G = AB$ be a $\pi$-separable group. Then $\abs{x^G}$ is a $\pi'$-number for each $\pi$-element $x\in A\cup B$ of prime power order if and only if the Hall $\pi$-subgroups of $G$ are abelian. Moreover, if this occurs, then the $\pi$-length of $G$ is at most $1$.
\end{proposition}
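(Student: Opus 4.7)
The proof will split into (a) sufficiency, (b) necessity of the abelian Hall $\pi$-subgroup condition, and (c) the $\pi$-length claim. For sufficiency, I would just observe that if every Hall $\pi$-subgroup of $G$ is abelian, then for any $\pi$-element $x\in G$ there is a Hall $\pi$-subgroup $K$ with $x\in K\leqslant \ce{G}{x}$. Hence $\abs{x^G}=\abs{G:\ce{G}{x}}$ divides $\abs{G:K}$, a $\pi'$-number. No prime-power assumption nor factorisation is needed for this direction.

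For necessity, the strategy is to use the tools already available in the paper. Because $G$ is $\pi$-separable, Lemma \ref{prefact} gives a prefactorised Hall $\pi$-subgroup $H=(H\cap A)(H\cap B)$ with $H\cap A\in\hall{\pi}{A}$ and $H\cap B\in\hall{\pi}{B}$. By Remark \ref{values} the hypothesis translates into: every prime power order element $x\in (H\cap A)\cup(H\cap B)$ has $\abs{x^G}$ a $\pi'$-number. The key observation is that for such an $x$ the index
\[
\abs{H:H\cap\ce{G}{x}} \;=\; \abs{H\ce{G}{x}:\ce{G}{x}}
\]
divides $\abs{G:\ce{G}{x}}$, which is a $\pi'$-number; but the left-hand side is simultaneously a $\pi$-number because $H$ is a $\pi$-group. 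Hence $H\leqslant\ce{G}{x}$. Since both $H\cap A$ and $H\cap B$ are generated by their prime power order elements, all such generators are centralised by $H$, so $H$ is abelian. As Hall $\pi$-subgroups are $G$-conjugate, every Hall $\pi$-subgroup is abelian.

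For the final assertion on $\pi$-length I would pass to $\overline{G}=G/\rad{\pi'}{G}$, which is still $\pi$-separable with abelian Hall $\pi$-subgroups and satisfies $\rad{\pi'}{\overline{G}}=1$. I would then invoke the standard fact for $\pi$-separable groups that $\ce{\overline{G}}{\rad{\pi}{\overline{G}}}\leqslant \rad{\pi}{\overline{G}}$. Taking a Hall $\pi$-subgroup $\overline{H}$ of $\overline{G}$, we have $\rad{\pi}{\overline{G}}\leqslant\overline{H}$, and since $\overline{H}$ is abelian, $\overline{H}\leqslant \ce{\overline{G}}{\rad{\pi}{\overline{G}}}\leqslant \rad{\pi}{\overline{G}}\leqslant \overline{H}$. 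Thus $\overline{H}=\rad{\pi}{\overline{G}}$ is normal in $\overline{G}$, so $\overline{G}/\rad{\pi}{\overline{G}}$ is a $\pi'$-group, and $G$ has $\pi$-length at most $1$.

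There is no serious obstacle here: the whole result is elementary once Lemma \ref{prefact} and Remark \ref{values} are available. The slightly delicate point is making sure to extract the abelianness of $H$ from information only about prime power order elements in $(H\cap A)\cup(H\cap B)$, which is precisely what the divisibility argument above handles.
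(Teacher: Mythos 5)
Your sufficiency argument and your treatment of the $\pi$-length claim are both correct and match the paper's (the latter is exactly the standard barred argument the paper leaves to the reader). The necessity direction, however, has a genuine gap at its central step. You assert that $\abs{H:H\cap\ce{G}{x}}=\abs{H\ce{G}{x}:\ce{G}{x}}$ divides $\abs{G:\ce{G}{x}}$, but $H\ce{G}{x}$ is in general not a subgroup, and this divisibility simply fails: take $G=S_4$, $\pi=\{2\}$, $H=\langle(1234),(13)\rangle\in\syl{2}{G}$ and $x=(12)(34)\in H$. Then $\abs{x^G}=3$ is a $\pi'$-number, yet $\abs{H:H\cap\ce{G}{x}}=2$ does not divide $3$, and indeed $H\nleqslant\ce{G}{x}$. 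So from ``$\abs{x^G}$ is a $\pi'$-number'' one may only conclude that \emph{some} Hall $\pi$-subgroup of $G$ lies in $\ce{G}{x}$ (equivalently $\rad{\pi}{G}\leqslant\ce{G}{x}$), not that the fixed prefactorised $H$ does; your argument conflates the two. (The global hypothesis of the proposition fails for $S_4$, but your deduction is purely local to each $x$, so the counterexample kills that step.)

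The paper closes exactly this gap by a minimal counterexample argument: one first reduces to $\rad{\pi'}{G}=1$, whence $\ce{G}{\rad{\pi}{G}}\leqslant\rad{\pi}{G}$ by $\pi$-separability. Then for each prime power order $x\in(H\cap A)\cup(H\cap B)$ the correct consequence of the hypothesis, namely $\rad{\pi}{G}\leqslant\ce{G}{x}$, gives $x\in\ce{G}{\rad{\pi}{G}}\leqslant\rad{\pi}{G}$; since $H\cap A$ and $H\cap B$ are generated by such elements, $H\leqslant\ce{G}{\rad{\pi}{G}}\leqslant\rad{\pi}{G}$, forcing $H=\rad{\pi}{G}$ to be abelian. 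You should replace your divisibility step with this argument (or an equivalent one); as written, the necessity direction does not go through.
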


\begin{proof}
We can work with $H=(H\cap A)(H\cap B)\in\hall{\pi}{G}$ such that $H\cap A\in\hall{\pi}{A}$ and $H\cap B\in\hall{\pi}{B}$ in virtue of Lemma \ref{prefact}. The converse of the first claim is clear by Remark \ref{values}. So let us prove that $H = (H\cap A)(H\cap B)$ is abelian when $\abs{x^G}$ is a $\pi'$-number for each $\pi$-element $x\in A\cup B$ of prime power order. Suppose that the assertion is false and let us take $G$ a minimal counterexample. Then $\rad{\pi'}{G}=1$ by minimality, and so $\ce{G}{\rad{\pi}{G}}\leqslant\rad{\pi}{G}$. Take a Sylow $q$-subgroup $Q$ of $H\cap A$. Then each $y\in Q$ satisfies by assumption that $y \in  \ce{G}{\rad{\pi}{G}}\leqslant\rad{\pi}{G}$. Since $H\cap A$ is generated by its Sylow subgroups, it follows that $H\cap A \leqslant \ce{G}{\rad{\pi}{G}}\leqslant\rad{\pi}{G}$ and analogously for $H\cap B$. Hence $H\leqslant \ce{G}{\rad{\pi}{G}}\leqslant \rad{\pi}{G}$, so $H$ is abelian. The last claim follows directly.
\end{proof}

\begin{example}
Without the $\pi$-separability hypothesis, the previous result is not true, even for a not necessarily factorised group: Let $G=J_4$ be a Janko group, and let $\pi=\{3\}$. Then all the $3$-elements of $G$ have conjugacy class size not divisible by $3$, although a Sylow $3$-subgroup is non-abelian. This example appears in \cite{NT}.
\end{example}

Now we prove a result related to the above theorems.

\begin{proposition}
\label{p'}
Let $G = AB$ be a $\pi$-separable group. Assume that a given prime $p$ does not divide $\abs{x^G}$ for each $\pi$-element $x\in A\cup B$ of prime power order. Then there exists a Sylow $p$-subgroup of $G$ which normalises some Hall $\pi$-subgroup of $G$.
\end{proposition}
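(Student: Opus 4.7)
The plan is induction on $|G|$. If $p\in\pi$, the conclusion is immediate: by the $D_\pi$-property of the $\pi$-separable group $G$, every Sylow $p$-subgroup of $G$ lies in some Hall $\pi$-subgroup, which it trivially normalises. For $p\in\pi'$, by Lemma \ref{prefact} I would fix a prefactorised Hall $\pi$-subgroup $H=(H\cap A)(H\cap B)$, and reinterpret the hypothesis as saying that $\ce{G}{x}$ contains a full Sylow $p$-subgroup of $G$ for every prime-power element $x\in (H\cap A)\cup(H\cap B)$.

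Now take a minimal normal subgroup $N$ of $G$ and try to lift the conclusion from $G/N$ by induction. If $N$ is a $\pi$-group, then $p\nmid|N|$ and Hall $\pi$-subgroups of $G$ contain $N$, so the lifting is routine. If $N$ is a $\pi'$-group with $p\nmid|N|$, then Schur--Zassenhaus writes the preimage of the lifted Hall $\pi$-subgroup as $\tilde H=N\rtimes H$, a lifted Sylow $p$-subgroup $P\in\syl{p}{G}$ normalises $\tilde H$, and $P$ acts on the $|N:\no{N}{H}|=|N:\ce{N}{H}|$ Hall $\pi$-subgroups of $\tilde H$ via orbits whose lengths divide both $|P|$ (a $p$-power) and $|N|$ (coprime to $p$), forcing $P$ to fix some $H^n$. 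If $N$ is abelian with $p\mid|N|$, then $N$ is an elementary abelian $p$-group contained in $\rad{p}{G}$, hence in every Sylow $p$-subgroup of $G$; the hypothesis therefore forces every prime-power element of $(H\cap A)\cup(H\cap B)$ to centralise $N$. Then $H\leq \ce{G}{N}$, so $N\leq \ce{G}{H}$, which makes $H$ the unique Hall $\pi$-subgroup of $NH$, and the lifted Sylow $p$-subgroup normalises this unique subgroup.

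The main obstacle is the residual case, in which a minimal counterexample has $\rad{\pi}{G}=1$ and every minimal normal subgroup $N=L_1\times\cdots\times L_k$ is a non-abelian $\pi'$-group with $p\mid|L_i|$. Here the hypothesis provides, for each prime-power $\pi$-element $x\in(H\cap A)\cup(H\cap B)$, a Sylow $p$-subgroup $Q=Q_1\times\cdots\times Q_k$ of $N$ centralised by $x$; non-triviality of the $Q_i$ forces $x$ to stabilise every $L_i$ and to induce on it an automorphism of order coprime to $|L_i|$ centralising $Q_i$. One would then try to invoke Lemma \ref{aut_lemma} to deduce that these induced automorphisms are trivial, whence $x\in \ce{G}{N}$, so $H\leq \ce{G}{N}$ and we are reduced to the abelian situation treated above. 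The subtlety is that Lemma \ref{aut_lemma} only guarantees the existence of some prime $r\in\pi(L_i)$ not dividing the centraliser of each non-trivial coprime automorphism, and $r$ need not coincide with $p$; closing this gap will likely require either a sharpened CFSG-based automorphism lemma, or a further reduction to a subgroup of the form $H\langle y\rangle$ or $N\langle y\rangle$ for a suitable element $y$, in the spirit of the argument used in the proof of Theorem \ref{pi-decompo}.
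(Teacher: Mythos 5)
Your reductions for $p\in\pi$ and for minimal normal subgroups that are $\pi$-groups, $\pi'$-groups of order coprime to $p$, or abelian $p$-groups are all sound, but the residual case is a genuine gap, and the intermediate claim you are steering towards there is in fact false, so no sharpening of Lemma \ref{aut_lemma} can rescue it. You want every prime power order $\pi$-element $x$ of $(H\cap A)\cup(H\cap B)$ to centralise the non-abelian minimal normal subgroup $N$; however, a coprime automorphism of a simple group can centralise a full Sylow $p$-subgroup without being trivial. For instance, take $L={}^2B_2(8)$ of order $2^6\cdot 5\cdot 7\cdot 13$ with a field automorphism $\alpha$ of order $3$: its fixed-point subgroup has order $20$ and hence contains a Sylow $5$-subgroup of $L$. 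With $G=L\rtimes\langle\alpha\rangle$, $\pi=\{3\}$, $p=5$ and the trivial factorisation, the hypothesis of the proposition holds (every element of order $3$ has class size $2^4\cdot 7\cdot 13$), yet $\alpha\notin\ce{G}{L}$. What is true, and all the conclusion needs, is the much weaker statement that $\no{N}{H}$ contains a Sylow $p$-subgroup of $N$.

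The paper establishes exactly that, by an entirely elementary route that never passes through minimal normal subgroups: in a minimal counterexample one may assume $N:=\rad{\pi'}{G}\neq 1$ and fix $P_0\in\syl{p}{N}$. For each prime $q\in\pi$ and each $a$ in a prefactorised Sylow $q$-subgroup $Q=(Q\cap A)(Q\cap B)$ of $H$, the hypothesis gives $a\in\ce{G}{P_0^n}$ for some $n\in N$; collecting these memberships yields $(Q\cap A)N=\ce{(Q\cap A)N}{P_0}N$, then $QN=\ce{QN}{P_0}N$, and aggregating over all $q\in\pi$ and both factors (using the Frattini argument $G=\no{G}{P_0}N$ to absorb $H$-conjugation), $HN=\ce{HN}{P_0}N$. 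Since this forces some $N$-conjugate of $P_0$ to centralise $H$, one gets $p\nmid\abs{N:\no{N}{H}}$; combined with the inductive conclusion for $G/N$ (a Sylow $p$-subgroup normalising $HN$) and a Frattini argument for Hall $\pi$-subgroups in $NHP$, this produces a Sylow $p$-subgroup normalising $H$. No CFSG input is needed. To repair your argument you would have to replace the goal ``$H\leqslant\ce{G}{N}$'' in the residual case by a covering or counting statement of this kind; as written, the step does not go through.
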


\begin{proof}
We may assume clearly that $p\in \pi'$. Besides, by conjugacy and Lemma \ref{prefact}, we may work with $H = (H \cap A)(H \cap B)\in\hall{\pi}{G}$. Let $G$ be a counterexample of least possible order. If $\rad{\pi}{G}\neq 1$, then by minimality we get the thesis. Hence we necessarily have that $N := \rad{\pi'}{G}\neq 1$.

We claim that $p$ does not divide $\abs{N:\no{N}{H}}$. Certainly, we may suppose that $p\in \pi(N)$. Let $P_0\in\syl{p}{N}$, and $G = \no{G}{P_0}N$ in virtue of Frattini's argument. For $q\in\pi$, let $Q=(Q\cap H\cap A)(Q\cap H\cap B)=(Q\cap A)(Q\cap B)$ be a prefactorised Sylow $q$-subgroup of $H$. If $a\in Q\cap A$, then by hypotheses we get $a\in\ce{Q\cap A}{P_0^n}$ for some $n\in N$. It follows $$(Q\cap A)N \subseteq \ce{(Q\cap A)N}{P_0}N \subseteq (Q\cap A)N,$$ so $(Q \cap A)N = \ce{(Q\cap A)N}{P_0}N\leqslant \ce{QN}{P_0}N$. We can argue analogously with $Q\cap B$ and thus $QN = (Q\cap A)(Q\cap B)N =\ce{QN}{P_0}N \leqslant \ce{HN}{P_0}N \leqslant HN$. Now for any $h\in H$, we also have $Q^hN=(QN)^h\leqslant (\ce{HN}{P_0}N)^h=\ce{HN}{P_0^h}N$. But $h\in G=\no{G}{P_0}N$, so we may assume $h\in N$ and so $Q^hN\leqslant\ce{HN}{P_0^h}N=(\ce{HN}{P_0}N)^h=\ce{HN}{P_0}N$. Since this is valid for each $q\in \pi$ we deduce $HN=\ce{HN}{P_0}N$. But $N$ is a $\pi'$-group, so there exists $n\in N$ such that $H \leqslant \ce{HN}{P_0^n}$. Hence $P_0^n \leqslant \ce{N}{H} \leqslant \no{N}{H} \leqslant N$. As $P_0\in\syl{p}{N}$, it follows that $p$ does not divide $\abs{N:\no{N}{H}}$.

On the other hand, by minimality there exists a Sylow $p$-subgroup $P$ of G that $P\leqslant \no{G}{HN}$. Again Frattini's argument for Hall $\pi$-subgroups produces $NHP = NH\no{NHP}{H}= N\no{NHP}{H}$. Therefore $p$ does not divides $\abs{NHP:\no{NHP}{H}}=\abs{N:\no{N}{H}}$. Thus there is a Sylow $p$-subgroup of $HNP$ (which is a Sylow $p$-subgroup of $G$) that normalises $H$.
\end{proof}

\medskip

In particular, when $G = A = B$ and $\pi = \{q\}$ we partially get \cite[Theorem 4.1]{BFM}. It is worth to remark again that both Propositions \ref{pi'} and \ref{p'} hold for any arbitrary factorisation of a $\pi$-separable group $G=AB$.


\section{On conjugacy class sizes of \texorpdfstring{$\pi$}{pi}-elements}
\label{pi-elements}

The assumptions in Corollary \ref{pi-decompo_ZGS} imply that the elements in the centre of a Hall $\pi$-subgroup $H$ of a group $G$ have to be central in $G$. Thus, a more general approach is to consider only the elements in $H\smallsetminus \ze{H}$, as Berkovich and Kazarin did through elementary arguments in \cite[Supplement to Theorem 1]{BK} for $\pi$-separable groups. For the sake of completeness, we present a proof of that result for groups which have a Hall $\pi$-subgroup (see Lemma \ref{BKlemma} below).

\begin{lemma}
\label{generated}
Let $H$ be a proper subgroup of a group $G$. Then $G = \langle G\smallsetminus H\rangle$.
\end{lemma}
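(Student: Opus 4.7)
The plan is to set $K := \langle G \smallsetminus H\rangle$, which is automatically a subgroup of $G$ containing the complement $G \smallsetminus H$, and to establish the equality $K = G$ by proving the remaining inclusion $H \subseteq K$. Since $H$ is proper in $G$, the complement $G \smallsetminus H$ is non-empty, so I can fix once and for all some element $g \in G \smallsetminus H \subseteq K$; this element will serve as a ``translator'' to push elements of $H$ outside $H$.

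The key step is the following elementary observation: for any $h \in H$, the product $hg$ cannot lie in $H$, for otherwise $g = h^{-1}(hg)$ would be a product of two elements of $H$ and so belong to $H$, contradicting the choice of $g$. Hence $hg \in G \smallsetminus H \subseteq K$. Since $K$ is a subgroup and $g \in K$, we also have $g^{-1} \in K$, and therefore $h = (hg) g^{-1} \in K$. As $h \in H$ was arbitrary this gives $H \subseteq K$, which combined with $G \smallsetminus H \subseteq K$ yields $G = K$.

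I do not expect any real obstacle: the whole argument is the standard ``multiply by a fixed outsider'' trick, and the only mild point to keep in mind is that $K$ is taken as the subgroup generated by $G \smallsetminus H$ (so that closure under inverses, used when passing from $g$ to $g^{-1}$, is automatic), rather than merely the set $G \smallsetminus H$ itself. Accordingly, the written proof will occupy only a few lines.
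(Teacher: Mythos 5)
Your proof is correct and uses the same idea as the paper: for $h\in H$ and $g\notin H$ the product $hg$ lies in $G\smallsetminus H$, so $h=(hg)g^{-1}$ belongs to the subgroup generated by $G\smallsetminus H$. The paper merely phrases this by enumerating $G\smallsetminus H$ as a finite list and observing that left multiplication by $z\in H$ permutes it; the substance is identical.
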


\begin{proof}
Assume $G\smallsetminus H = \{x_1, \ldots ,x_k\}$ for $k \geq 1$. Clearly $G$ is a disjoint union of this last set and $H$. Let $X := \langle x_1, \ldots , x_k\rangle$, and let $z \in H$. Since for any $x_j \in G\smallsetminus H$ it holds that $zx_j \in G\smallsetminus H$, then $zx_j =x_s$ for some $s$, and it follows $z\in X$. Hence $H$ and $G \smallsetminus H$ are contained in $X$, which finishes the proof.
\end{proof}

\begin{lemma}
\label{BKlemma}
Let $G$ be a group with a non-abelian Hall $\pi$-subgroup $H$. Then $G$ is $\pi$-decomposable whenever every element in $H\smallsetminus \ze{H}$ has class size a $\pi$-number.
\end{lemma}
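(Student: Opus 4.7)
The plan is to show first that $H$ is normal in $G$ (indeed $H = \rad{\pi}{G}$), and then to produce a Hall $\pi'$-complement $L$ of $H$ that centralises $H$, so that $G = H\times L$ is $\pi$-decomposable. The tools are exactly Lemmas \ref{wielandt} and \ref{generated} together with Schur--Zassenhaus and an elementary coprime-order argument.

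First I would argue that $H\trianglelefteq G$. Since $H$ is non-abelian, $\ze{H}$ is a proper subgroup of $H$, so Lemma \ref{generated} applied to the pair $\ze{H}<H$ gives $H = \langle H\smallsetminus \ze{H}\rangle$. By hypothesis every $x\in H\smallsetminus\ze{H}$ has $\abs{x^G}$ a $\pi$-number, so Lemma \ref{wielandt} places such $x$ in $\rad{\pi}{G}$. Consequently $H\leqslant \rad{\pi}{G}$, and as $H\in\hall{\pi}{G}$ this forces $H=\rad{\pi}{G}\trianglelefteq G$.

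Next, since $H$ is a normal Hall $\pi$-subgroup of $G$, Schur--Zassenhaus yields a complement $L$ of $H$ in $G$; thus $G=HL$, $H\cap L=1$, and $L\in\hall{\pi'}{G}$. Now let $h\in H\smallsetminus \ze{H}$ and consider the action of $L$ on $h^G$ by conjugation. The $L$-orbit of $h$ has size
\[
\abs{L:L\cap \ce{G}{h}} \;=\; \abs{L\,\ce{G}{h}:\ce{G}{h}},
\]
which divides $\abs{G:\ce{G}{h}}=\abs{h^G}$, a $\pi$-number; on the other hand it divides $\abs{L}$, a $\pi'$-number. Hence this index equals $1$, that is, $L\leqslant \ce{G}{h}$.

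Since the previous argument is valid for every $h\in H\smallsetminus \ze{H}$, we conclude
\[
L \;\leqslant\; \bigcap_{h\in H\smallsetminus \ze{H}} \ce{G}{h} \;=\; \ce{G}{\langle H\smallsetminus \ze{H}\rangle} \;=\; \ce{G}{H},
\]
using again $H=\langle H\smallsetminus \ze{H}\rangle$. Therefore $[H,L]=1$ and $G=HL=H\times L=\rad{\pi}{G}\times\rad{\pi'}{G}$, proving that $G$ is $\pi$-decomposable. There is no genuine obstacle in this plan; the only delicate point is the orbit-size computation in the third step, where one must notice that an $L$-orbit in $h^G$ has length dividing both $\abs{L}$ and $\abs{h^G}$, which are coprime under the hypothesis.
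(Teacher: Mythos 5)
Your first step is exactly the paper's: Lemma \ref{wielandt} puts every $x\in H\smallsetminus\ze{H}$ into $\rad{\pi}{G}$, Lemma \ref{generated} gives $H=\langle H\smallsetminus\ze{H}\rangle\leqslant\rad{\pi}{G}$, so $H=\rad{\pi}{G}\unlhd G$ and Schur--Zassenhaus supplies a complement. The gap is precisely at the point you flag as ``delicate''. The index $\abs{L\ce{G}{h}:\ce{G}{h}}=\abs{L:L\cap\ce{G}{h}}$ does \emph{not} in general divide $\abs{G:\ce{G}{h}}$: that divisibility holds when $L\ce{G}{h}$ is a subgroup, which you have not established and which typically fails. (Already in $S_3$, two distinct subgroups $L,C$ of order $2$ give $\abs{L:L\cap C}=2$, which does not divide $\abs{G:C}=3$; a product set $LC$ is merely a union of $\abs{L:L\cap C}$ cosets of $C$ inside $G$, giving an inequality, not divisibility.) What the hypothesis $\abs{h^G}$ a $\pi$-number actually yields is that $\ce{G}{h}$ contains \emph{some} Hall $\pi'$-subgroup of $G$, i.e.\ $L^x\leqslant\ce{G}{h}$ for some $x$ (which one may take in $H$ since $G=HL$). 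The jump from ``some conjugate of $L$ centralises $h$'' to ``$L$ itself centralises $h$'', with one fixed $L$ working simultaneously for all $h\in H\smallsetminus\ze{H}$, is the real content of the lemma, and it cannot be obtained elementwise: there are groups with a normal non-abelian Hall $\pi$-subgroup and a non-central $h\in H$ with $\abs{h^G}$ a $\pi$-number such that a given Hall $\pi'$-subgroup fails to centralise $h$ (only a conjugate of it does).

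The paper closes exactly this gap with a covering argument that your proposal is missing. From $h\in\ce{H}{F^x}=\ce{H}{F}^x$ for each $h\in H\smallsetminus\ze{H}$ one gets $H=\bigcup_{x\in H}\left(\ze{H}\ce{H}{F}\right)^x$; since a finite group is not the union of the conjugates of a proper subgroup, this forces $H=\ze{H}\ce{H}{F}$, hence $\ce{H}{F}\unlhd H$, and only \emph{then} does each non-central element lie in $\ce{H}{F}$ itself, giving $H=\langle H\smallsetminus\ze{H}\rangle\leqslant\ce{H}{F}$ and $G=H\times F$. To repair your write-up, replace the orbit-size computation in your third step by this conjugation-and-covering argument (or an equivalent one); as it stands, the third step asserts an unjustified and generally false divisibility.
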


\begin{proof}
In virtue of Lemma \ref{wielandt} it follows that every element $x\in H\smallsetminus \ze{H}$ lies in $\rad{\pi}{G}$, and Lemma \ref{generated} leads to $H=\langle H\smallsetminus \ze{H}\rangle \leqslant\rad{\pi}{G}$. So $G$ has a normal Hall $\pi$-subgroup and it is $\pi$-separable. Let $F$ be a Hall $\pi'$-subgroup of $G$. If $g\in H\smallsetminus \ze{H}$, then by hypotheses $g\in \ce{H}{F^x}$ for some $x\in H$ since $G=HF$. Thus $H\subseteq \cup_{x\in H} (\ze{H}\ce{H}{F})^x \subseteq H$, so $H=\ze{H}\ce{H}{F}$ and $\ce{H}{F}$ is normal in $H$. Thus, every element $g\in H\smallsetminus \ze{H}$ lies in $\ce{H}{F}$. Since $H=\langle H\smallsetminus \ze{H}\rangle\leqslant \ce{H}{F}$, it follows $G=HF=H\times F$, as desired.
\end{proof}

\begin{example}
In view of the previous section, one might wonder whether the hypotheses in Lemma \ref{BKlemma} can be restricted to only prime power order $\pi$-elements. However, this is simply not possible:

Let $G$ be the direct product of a symmetric group of degree 3 and a non-abelian group of order 55, and let $\pi=\{2,3,11\}$. Then $H\in\hall{\pi}{G}$ is clearly non-abelian, $G$ is not $\pi$-decomposable, and $\abs{x^G}$ is a $\pi$-number for every element $x\in H\smallsetminus \ze{H}$ of prime power order.
\end{example}

Our next objective is to generalise Lemma \ref{BKlemma} for $\pi$-separable groups with a core-factorisation.

\begin{theorem}
\label{pi-non-central}
Let $G = AB$ be a core-factorisation, and suppose that $G$ is $\pi$-separable. Let $H = (H\cap A)(H\cap B)$ be a Hall $\pi$-subgroup of $G$ such that $H\cap X\in\hall{\pi}{X}$ for all $X\in \{A, B\}$, and assume that $H$ is non-abelian. Then the following statements are equivalent:
\begin{enumerate}
\item[\emph{(1)}] Every element in $((H\cap A)\cup (H\cap B))\smallsetminus \ze{H}$ has $G$-class size a $\pi$-number. 

\item[\emph{(2)}] For each $X\in \{A, B\}$, it follows that either $H\cap X\leqslant \ze{H}$ or $H\cap X \leqslant \ce{H}{F}$ for every $F\in\hall{\pi'}{G}$.
\end{enumerate}
Furthermore, if $H\cap X\leqslant\ze{H}$, then $X$ has $\pi$-length at most 1; and if $H\cap X \leqslant \ce{H}{F}$ for every $F\in\hall{\pi'}{G}$, then $X$ is $\pi$-decomposable.
\end{theorem}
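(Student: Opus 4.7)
The implication $(2)\Rightarrow(1)$ is immediate: if $x\in((H\cap A)\cup(H\cap B))\setminus\ze{H}$ with $x\in H\cap X$, then $H\cap X\not\leqslant\ze{H}$, so (2) supplies every $F\in\hall{\pi'}{G}$ as a subgroup of $\ce{G}{x}$, whence $\abs{x^G}$ divides the $\pi$-number $\abs{G:F}$. I would therefore concentrate the work on the converse together with the two structural assertions.

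The entire converse rests on a single arithmetic observation that I would record at the outset: \emph{if $K$ is a finite group, $x\in K$ has $\abs{x^K}$ a $\pi$-number and $y\in K$ is a $\pi'$-element, then $y\in\ce{K}{x}$}, because the image of $y$ in $K/\ce{K}{x}$ has order dividing both $o(y)$ and the $\pi$-number $\abs{K:\ce{K}{x}}$, and is therefore trivial.

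Fix $X\in\{A,B\}$ and suppose (1) holds. If $H\cap X\leqslant\ze{H}$, then $H\cap X$ is abelian and, since subgroups of $\pi$-separable groups are $\pi$-separable, $X$ is $\pi$-separable; the classical result that a $\pi$-separable group with abelian Hall $\pi$-subgroups has $\pi$-length at most $1$ (the same fact invoked at the end of Proposition~\ref{pi'}) gives the first structural conclusion. If instead $H\cap X\not\leqslant\ze{H}$, then $\ze{H}\cap X\subsetneq H\cap X$, so Lemma~\ref{generated} yields $H\cap X=\langle(H\cap X)\setminus\ze{H}\rangle$. For each such generator $x$, hypothesis (1) gives $\abs{x^G}$ a $\pi$-number, so the observation applied inside $G$ forces every element of every $F\in\hall{\pi'}{G}$ to centralise $x$, whence $F\leqslant\ce{G}{H\cap X}$, i.e.\ $H\cap X\leqslant\ce{H}{F}$ — which is (2). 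Moreover $\abs{x^X}$ divides $\abs{x^G}$ and so is a $\pi$-number, so Lemma~\ref{wielandt} forces $x\in\rad{\pi}{X}$; hence $H\cap X=\rad{\pi}{X}\unlhd X$. Since $\abs{H\cap X}$ and $\abs{X:H\cap X}$ are coprime, Schur--Zassenhaus provides a Hall $\pi'$-complement $X'$ in $X$, and reapplying the observation inside $X$ gives $X'\leqslant\ce{X}{H\cap X}$, so $X=(H\cap X)\times X'$ is $\pi$-decomposable.

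The main obstacle I expect is spotting the arithmetic observation itself: hypothesis (1) directly delivers only that $\ce{G}{x}$ contains \emph{some} Hall $\pi'$-subgroup, while (2) demands centralisation by \emph{every} such subgroup, and this apparent strengthening is what makes the equivalence look non-trivial. Once the observation is in hand the remainder is a routine combination of Lemmas~\ref{generated} and~\ref{wielandt} with Schur--Zassenhaus, and no further appeal to the core-factorisation structure is needed in the substantive case.
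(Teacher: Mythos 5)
Your argument collapses at the very first step. The ``arithmetic observation'' --- that $\abs{x^K}$ being a $\pi$-number forces every $\pi'$-element $y\in K$ to centralise $x$ --- is false, because $\ce{K}{x}$ is not in general normal in $K$, so there is no quotient group $K/\ce{K}{x}$ in which ``the image of $y$'' has an order; the coset space $K/\ce{K}{x}$ is not a group. A minimal counterexample: in $K=\op{Sym}(3)$ with $\pi=\{3\}$ and $x=(1,2)$, the class size $\abs{x^K}=3$ is a $\pi$-number, yet the $\pi'$-element $y=(1,3)$ does not centralise $x$. What the hypothesis actually gives (via $\pi$-separability) is only that $\ce{G}{x}$ contains \emph{some} Hall $\pi'$-subgroup $F^{g}$, with $g$ depending on $x$; upgrading this to centralisation by \emph{every} Hall $\pi'$-subgroup, uniformly over all the generators $x$ of $H\cap X$, is precisely the content of the theorem, and it is exactly what the paper's proof labours to establish through its four-step induction (normality of $H\cap X$ in $H$ via the covering identity $H=(H\cap X)\ce{H}{F}\ze{H}$, reduction to minimal normal $\pi$-subgroups, and --- crucially --- the core-factorisation hypothesis to pick a minimal normal subgroup covered by one factor in Step 4). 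Your remark that ``no further appeal to the core-factorisation structure is needed'' should itself have been a warning sign.

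There is a second, independent error in your derivation of $\pi$-decomposability of $X$: you assert that $\abs{x^X}$ divides $\abs{x^G}$ for $x\in X$. This holds for normal (or subnormal) subgroups, not for arbitrary ones, and the paper explicitly cautions that in a core-factorisation there is in general no relation between $\pi(\abs{x^X})$ and $\pi(\abs{x^G})$. So the appeal to Lemma~\ref{wielandt} inside $X$, and the Schur--Zassenhaus step built on it, are unfounded. The paper instead obtains $\pi$-decomposability of $X$ directly from the conclusion $H\cap X\leqslant\ce{H}{F}$ applied to a prefactorised $F$, giving $H\cap X\leqslant\ce{X}{F\cap X}$ with $F\cap X\in\hall{\pi'}{X}$ --- no class-size computation inside $X$ is needed. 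As it stands, only your implication (2)~$\Rightarrow$~(1) and the $\pi$-length statement are correct; the substantive direction needs to be redone along entirely different lines.
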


\begin{proof}
Let $F=(F\cap A)(F\cap B)$ be a prefactorised Hall $\pi'$-subgroup of $G$ as in Lemma \ref{prefact}. Since $H\cap X\in\hall{\pi}{X}$, the last claim of the result follows from the fact that either $H\cap X$ is abelian, or $H\cap X\leqslant\ce{H\cap X}{F}\leqslant\ce{X}{F\cap X}$ being $F\cap X\in\hall{\pi'}{X}$. Moreover, the implication (2) $\Rightarrow$ (1) is clear. Therefore, it is enough to show that (1) $\Rightarrow$ (2). Notice that $H$ is non-abelian by assumption, so there exists some $X\in\{A, B\}$ such that $H\cap X\nleqslant \ze{H}$. Now, let us fix some arbitrary $F\in\hall{\pi'}{G}$, and note that $G=HF$. We split the proof in a number of steps.

\medskip

\underline{\sc{Step} 1:} If $H\cap X\nleqslant \ze{H}$ and $H\cap X$ is normal in $H$, then $H\cap X\leqslant \ce{H}{F}$.

\medskip

Let $\{X,Y\}=\{A,B\}$. We claim that $H=(H\cap X)\ce{H}{F}\ze{H}$, and we distinguish two cases. If $H\cap Y \leqslant \ze{H}$, then clearly $H=(H\cap X)\ze{H}= (H\cap X)\ce{H}{F}\ze{H}$. If $H\cap Y \nleqslant \ze{H}$, then we can pick $y \in (H\cap Y)\smallsetminus \ze{H}$. By our hypotheses, it follows that $y\in\ce{G}{F}^h$ for some $h\in H$, and hence $H\cap Y\subseteq \cup_{h\in H} \ce{H}{F}^h\ze{H}$. Since $H\cap X\unlhd H$, then $$ H \subseteq (H\cap X)\bigcup\limits_{h\in H} \ce{H}{F}^h\ze{H} \subseteq  \bigcup\limits_{h\in H} [(H\cap X)\ce{H}{F}\ze{H}]^h\subseteq H.$$ This fact yields $H= (H\cap X)\ce{H}{F}\ze{H}$. 

Now we choose $x\in (H\cap X)\smallsetminus \ze{H}$. Thus, we get $x\in\ce{H\cap X}{F^h}$ with $h\in (H\cap X)\ze{H}$. Indeed, $h=gz$ with $g\in H\cap X$ and $z\in \ze{H}$, so $x^{g^{-1}}=x^{h^{-1}}\in\ce{H\cap X}{F}$. We deduce $$H\cap X = \bigcup\limits_{g\in H\cap X} \ce{H\cap X}{F}^g (\ze{H}\cap X) = \bigcup\limits_{g\in H\cap X} [\ce{H\cap X}{F} (\ze{H}\cap X)]^g,$$ so $H\cap X=\ce{H\cap X}{F} (\ze{H}\cap X)$ and $\ce{H\cap X}{F}$ is normal in $H\cap X$. Now each element in $(H\cap X)\smallsetminus\ze{H}$ lies in $\ce{H\cap X}{F}$. Since $(H\cap X)\smallsetminus\ze{H} = (H\cap X)\smallsetminus (\ze{H}\cap X)$, in virtue of Lemma \ref{generated} we obtain $H\cap X=\langle (H\cap X)\smallsetminus\ze{H} \rangle\leqslant\ce{H}{F}$, as wanted.

\medskip

\noindent Now we assume without loss of generality that $H\cap A\nleqslant \ze{H}$. So the remainder of the proof aims to show that $H\cap A\leqslant\ce{H}{F}$.

\medskip

\underline{\sc{Step} 2:} We may suppose that neither $H\cap A$ nor $H\cap B$ are normal in $H$.

\medskip

By Step 1, we may assume that $H\cap A$ is not normal in $H$. If $H\cap B\leqslant\ze{H}$, then $H=(H\cap A)\ze{H}$ and $H\cap A\unlhd H$, a contradiction. Therefore $H\cap B\nleqslant \ze{H}$. If $H\cap B$ is normal in $H$, then by Step 1 it centralises $F$. So $H=(H\cap A)\ce{H}{F}$ and arguing similarly as in the last paragraph of Step 1, we can deduce that $H\cap A\leqslant\ce{H}{F}$.

\medskip

\underline{\sc{Step} 3:} If $N$ is a minimal normal subgroup of $G$, then $N$ is a $\pi$-group.

\medskip

Otherwise, we may assume that $N$ is a $\pi'$-group because $G$ is $\pi$-separable. We argue by induction on the order of $G$. We claim that the quotient $\overline{G}:=G/N$ inherits the hypotheses. Clearly we can assume $1\neq \overline{G}$, since $N = G$ implies the result trivially. Note that for $X\in\{A, B\}$, it holds $\overline{H\cap X} \leqslant \overline{H}\cap \overline{X}$ and, as $H\cap X\in\hall{\pi}{X}$, then $\overline{H\cap X} = \overline{H}\cap \overline{X}$. Thus $\overline{H}$ is prefactorised as in Lemma \ref{prefact}. Also $\overline{H}$ is non-abelian, $\overline{G}$ is a core-factorisation, and the class size condition is clearly inherited by quotients of $G$, so $\overline{G}$ satisfies the assumptions. 

By induction either $\overline{H}\cap \overline{X}\leqslant\ze{\overline{H}}$ or $\overline{H}\cap \overline{X}\leqslant\ce{\overline{G}}{\overline{F}}$ for all $X\in\{A, B\}$. If $\overline{H}\cap \overline{X}\leqslant \ze{\overline{H}}$, then $[H\cap X, H]\leqslant N\cap H=1$ and $H\cap X\leqslant\ze{H}$, a contradiction with Step 2. Therefore we necessarily have $[H\cap X, F]\leqslant N\leqslant F$, so $H\cap X$ normalises $F$. Since this is valid for all $X\in\{A, B\}$, we get that $F$ is normal in $G$. If $x\in (H\cap A)\smallsetminus \ze{H}$, then the fact that $\abs{x^G}$ is a $\pi$-number implies that $x\in\ce{H}{F}$. As $H\cap A$ is generated by the elements in $(H\cap A)\smallsetminus\ze{H}$, then $H\cap A$ centralises $F$, as wanted.

\medskip

\underline{\sc{Step} 4:} Conclusion.

\medskip

Since $G=AB$ is a core-factorisation, we can choose a minimal normal subgroup $N$ of $G$ which is covered by some $X\in\{A, B\}$. Moreover, $N$ is a $\pi$-group by the previous step. We consider $\overline{G}:=G/N$. If $\overline{H}$ is abelian, then $1\neq H' \leqslant N\leqslant H\cap X\leqslant H$, so $H\cap X$ is normal in $H$, which cannot happen because of Step 2. Thus, $\overline{G}$ inherits the hypotheses, and so $\overline{G}$ satisfies the thesis by induction on $\abs{G}$.

Now if $\overline{H\cap X}\leqslant\ze{\overline{H}}$, then $[H\cap X, H\cap Y]\leqslant N\leqslant H\cap X$, so $H\cap X$ is normal in $H$, a contradiction again with Step 2. Therefore, both $\overline{H\cap A}$ and $\overline{H\cap B}$ centralise $\overline{F}$, and it follows that $\overline{H}$ centralises $\overline{F}$. Hence $FN$ normal in $G$, and for all $g\in G$ there is some $n\in N$ such that $F^g=F^n$. 

Next we claim that $N = (\ze{H}\cap N)\ce{N}{F}$. If $N\leqslant \ze{H}$ then the claim is clear. If $N\nleqslant \ze{H}$, then we can take $m\in N\smallsetminus\ze{H}$ and by assumptions $m\in \ce{N}{F}^n$ for some $n\in N$. Hence $N = \cup_{n\in N} [(\ze{H}\cap N)\ce{N}{F}]^n$ and so $N = (\ze{H}\cap N)\ce{N}{F}$. 

Consequently, since each element $x\in (H\cap A) \smallsetminus \ze{H}$ lies in $\ce{H}{F}^n$ for some $n\in N$ and $N = (\ze{H}\cap N)\ce{N}{F}$, it follows $x\in \ce{H}{F}$. Thus $H\cap A = \langle (H\cap A) \smallsetminus \ze{H}\rangle \leqslant \ce{H}{F}$. 

Finally, we can argue analogously with $H\cap B$ in case that $H\cap B\nleqslant \ze{H}$. The result is now proved.
\end{proof}

\begin{example}
\label{BK-no-piseparable}
In contrast to Lemma \ref{BKlemma}, which relaxes the $\pi$-separability assumption in \cite[Supplement to Theorem 1]{BK}, we show that this condition is necessary in Theorem \ref{pi-non-central}: Let $G=A\times B$ be the direct product of $A=J_4$ a Janko group and $B=C_3$ a cyclic group of order 3, and let $\pi=\{3\}$. Note that this is clearly a core-factorisation, and $G$ is not $3$-separable. Moreover, if we take $P\in\syl{3}{G}$ such that $P=(P\cap A)(P\cap B)$ with $P\cap A\in\syl{3}{A}$ and $B=P\cap B\in\syl{3}{B}$, then $P$ is non-abelian and all the elements $x\in ((P\cap A)\cup (P\cap B))\smallsetminus\ze{P}=(P\cap A)\smallsetminus\ze{P}$ have $\abs{x^G}$ not divisible by $3$. However, neither $P\cap A$ is central in $P$ nor $P\cap A$ centralises every Hall $3'$-subgroup of $G$.
\end{example}

\begin{example}
The following example shows that in Theorem \ref{pi-non-central} we cannot affirm that $G$ is $\pi$-decomposable, in contrast to Lemma \ref{BKlemma}: Let $A$ be a dihedral group of order $8$ and let $B$ be a dihedral group of order $10$, and consider $\pi = \{2\}$. Then $G = A \times B$ satisfies the hypotheses in Theorem \ref{pi-non-central} but clearly it is not $2$-decomposable.
\end{example}

As a consequence, we obtain the next result.

\begin{theoremletters}
\label{pi-pi'}
Let $G = AB$ be a core-factorisation, and suppose that $G$ is $\pi$-separable. Let $H = (H\cap A)(H\cap B)$ be a Hall $\pi$-subgroup of $G$ such that $H\cap X\in\hall{\pi}{X}$ for all $X\in \{A, B\}$. Then the next assertions are pairwise equivalent:
\begin{enumerate}
	\item[\emph{(1)}] Every element in $(H\cap A)\cup (H\cap B)$ has $G$-class size either a $\pi$-number or a $\pi'$-number.
	
	\item[\emph{(2)}] For each $X\in \{A, B\}$, it follows that either $H\cap X \leqslant \ce{H}{F}$ for every $F\in\hall{\pi'}{G}$ or $H\cap X\leqslant \ze{H}$. 
\end{enumerate}
In addition:
\begin{itemize}
	\item[\emph{(a)}] $H\cap X\leqslant \ze{H}$ if and only if all $\abs{x^G}$ are $\pi'$-numbers for $x \in H\cap X$. In this case the $\pi$-length of $X$ is at most 1.
	
	\item[\emph{(b)}] $H\cap X \leqslant \ce{H}{F}$ for every $F\in\hall{\pi'}{G}$ if and only if all $\abs{x^G}$ are $\pi$-numbers for $x \in H\cap X$. In this case $X$ is $\pi$-decomposable.
\end{itemize}
\end{theoremletters}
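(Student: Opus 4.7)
The plan is to reduce everything to Theorem \ref{pi-non-central} via a short arithmetic observation, and then read off parts (a) and (b) from what has already been established. The implication (2)$\Rightarrow$(1) is direct: if $H\cap X\leqslant\ze{H}$, then $H\leqslant\ce{G}{x}$ for every $x\in H\cap X$, so $\abs{x^G}$ divides $\abs{G:H}$ and is a $\pi'$-number; and if $H\cap X$ centralises every $F\in\hall{\pi'}{G}$, then $\ce{G}{x}$ contains every such $F$ and $\abs{x^G}$ is a $\pi$-number.

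The key observation for the reverse direction is that, for $x\in H\cap X$, the $\pi$-number $\abs{H:\ce{H}{x}}$ divides $\abs{x^G}$; hence if $\abs{x^G}$ is a $\pi'$-number it must equal $1$, forcing $x\in\ze{H}$. Assuming (1), the dichotomy therefore implies that every element of $((H\cap A)\cup (H\cap B))\smallsetminus\ze{H}$ has $G$-class size a $\pi$-number. If $H$ is abelian, then $H\cap X\leqslant\ze{H}$ for both factors and (2) holds trivially; otherwise Theorem \ref{pi-non-central} applies directly and yields (2).

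Part (a) is now an easy consequence: the forward direction is the computation above, and the converse is our key observation. The bound on the $\pi$-length of $X$ follows from Proposition \ref{pi'} applied to the trivial core-factorisation $X=X\cdot X$, since $X$ inherits $\pi$-separability from $G$ and $\abs{x^X}$ divides $\abs{x^G}$ for every $x\in H\cap X$. For (b), the forward direction mirrors the proof of (2)$\Rightarrow$(1); for the converse, once all $\abs{x^G}$ are $\pi$-numbers, (1) and hence (2) hold, and either $H\cap X\leqslant\ce{H}{F}$ for every $F$ (done) or $H\cap X\leqslant\ze{H}$, in which case (a) turns each $\abs{x^G}$ into a $\pi'$-number as well, forcing $\abs{x^G}=1$ and $H\cap X\leqslant\ze{G}\leqslant\ce{H}{F}$. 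The $\pi$-decomposability of $X$ is then the final clause of Theorem \ref{pi-non-central}. The only real obstacle is (1)$\Rightarrow$(2), but it dissolves after the observation that a $\pi'$-sized $G$-class of a $\pi$-element in $H$ must push the element into $\ze{H}$; after that, Theorem \ref{pi-non-central} does the heavy lifting.
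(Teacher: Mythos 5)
Your reduction to Theorem \ref{pi-non-central} hinges on the ``key observation'' that for $x\in H\cap X$ the quantity $\abs{H:\ce{H}{x}}$ divides $\abs{x^G}$, so that a $\pi'$-valued $\abs{x^G}$ forces $x\in\ze{H}$. This is false: $\abs{x^H}$ divides $\abs{x^G}$ when $H$ is normal (or subnormal) in $G$, but not for an arbitrary subgroup containing $x$, and a Hall $\pi$-subgroup is generally not normal. Concretely, take $G=S_4$, $\pi=\{2\}$, $x=(1,2)(3,4)$ and $H=\langle (1,2,3,4),(1,3)\rangle\in\syl{2}{G}$. Then $\abs{x^G}=3$ is a $\pi'$-number, yet $\ze{H}=\langle (1,3)(2,4)\rangle$ does not contain $x$; moreover $\abs{x^H}=2$ does not divide $3$. (Of course $S_4$ does not satisfy hypothesis (1) of the theorem, but the counterexample kills the lemma you rely on, and with it the claim that under (1) every element of $((H\cap A)\cup(H\cap B))\smallsetminus\ze{H}$ has $G$-class size a $\pi$-number.) The same false divisibility $\abs{x^X}\mid\abs{x^G}$ reappears in your justification of the $\pi$-length bound in (a); the paper even warns explicitly that this divisibility holds for subnormal factors ``although in general this is not the case.''

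This gap is exactly what the paper's induction is designed to circumvent. The actual proof of (1)$\Rightarrow$(2) first shows $\rad{\pi'}{G}\neq 1$ by treating the case $\rad{\pi'}{G}=1$ separately: there $\ce{G}{\rad{\pi}{G}}\leqslant\rad{\pi}{G}$, Lemma \ref{wielandt} and the self-centralising property force $H=\rad{\pi}{G}$ to be \emph{normal}, and only then does a $\pi'$-valued class size push $x$ into $\ze{\rad{\pi}{G}}=\ze{H}$, so that Theorem \ref{pi-non-central} applies. When $\rad{\pi'}{G}\neq 1$ one passes to $G/\rad{\pi'}{G}$, applies induction, and argues separately (using $H\cap\rad{\pi'}{G}=1$) that a non-central element of $H\cap X$ cannot have $\pi'$-class size. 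Your (2)$\Rightarrow$(1) direction and the derivations of (a) and (b) \emph{from} (2) are essentially fine, but the central implication (1)$\Rightarrow$(2) does not ``dissolve'' as you claim; it needs the inductive reduction to a normal Hall $\pi$-subgroup.
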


\begin{proof}
The implication (2) $\Rightarrow$ (1) is clear. Let us prove (1) $\Rightarrow$ (2). We may suppose that $H$ is non-abelian. We work by induction on the order of $G$, and we first claim $\rad{\pi'}{G}\neq 1$. Otherwise $\rad{\pi}{G}$ is self-centralising in $G$. If $x\in (H\cap A)\cup (H\cap B)$, then $\abs{x^G}$ is either a $\pi$-number or a $\pi'$-number. In the first case $x\in \rad{\pi}{G}$ because of Lemma \ref{wielandt}, and in the second case $x\in \ce{G}{\rad{\pi}{G}}\leqslant\rad{\pi}{G}$. Since this is valid for every element $x\in (H\cap A)\cup (H\cap B)$, it follows that $\rad{\pi}{G}=H$ is prefactorised. Thus, for each $X\in\{A, B\}$ the elements $x\in \rad{\pi}{G}\cap X$ with $\abs{x^G}$ a $\pi'$-number lie in $\ze{\rad{\pi}{G}}$. So any $\abs{x^G}$ is a $\pi$-number for the elements $x\in ((H\cap A)\cup (H\cap B))\smallsetminus\ze{H}$. Applying Theorem \ref{pi-non-central} we obtain for each $X\in\{A, B\}$ that either $H\cap X\leqslant\ze{H}$ or $H \cap X = \rad{\pi}{G} \cap X\leqslant \ce{G}{F}$ for every $\pi'$-Hall subgroup $F$ of $G$, as desired. It follows then $\rad{\pi'}{G}\neq 1$.

Now, by induction, we get that $\overline{G} := G/\rad{\pi'}{G}$ satisfies the thesis. Let $X\in \{A, B\}$, so we have either $\overline{H\cap X}=\overline{H}\cap \overline{X}\leqslant\ze{\overline{H}}$ or $\overline{H\cap X}=\overline{H}\cap \overline{X}\leqslant\ce{\overline{G}}{\overline{F}}$ for any $F\in\hall{\pi'}{G}$. The first case leads to $[H\cap X, H]\leqslant H\cap \rad{\pi'}{G}=1$, so $H\cap X \leqslant \ze{H}$ and we are done. Hence, let us suppose $\overline{H \cap X}\leqslant\ce{\overline{G}}{\overline{F}}$ and $H\cap X\nleqslant \ze{H}$. Now if $\abs{x^G}$ is a $\pi'$-number for some $x\in (H\cap X)\smallsetminus \ze{H}$, then $|\overline{x}^{\overline{G}}|$ so is too. But $\overline{x}\in \overline{H\cap X}\leqslant\ce{\overline{G}}{\overline{F}}$, and we get that $\overline{x}$ is central in $\overline{G}$. In particular, $[H, \langle x\rangle]\leqslant\rad{\pi'}{G}\cap H=1$, so $x\in\ze{H}$, a contradiction. Thus, any $\abs{x^G}$ is a $\pi$-number for the elements $x\in ((H\cap A)\cup (H\cap B))\smallsetminus\ze{H}$, so the thesis follows as an application again of Theorem \ref{pi-non-central}. This completes the proof of (1) $\Rightarrow$ (2).

Next we prove (a). For the first claim, certainly only the sufficient condition is in doubt. So let us suppose that all $\abs{x^G}$ are $\pi'$-numbers for $x \in H\cap X$. By (2), either $H\cap X\leqslant \ze{H}$ or $H\cap X \leqslant \ce{H}{F}$ for every $F\in\hall{\pi'}{G}$. In the first case we are done, and in the second case it follows that any element in $H\cap X$ is central in $G$, so $H\cap X\leqslant\ze{H}$ also. Moreover, the last assertion follows from the fact that $X$ has abelian Hall $\pi$-subgroups.

Finally we prove (b). Again, it is enough to show in the first claim the sufficient condition. Let us suppose that all $\abs{x^G}$ are $\pi$-numbers for $x \in H\cap X$. If the case $H\cap X\leqslant \ze{H}$ in (2) holds, then $H\cap X$ is central in $G$ and we are done. So $H\cap X \leqslant \ce{H}{F}$ for every $F\in\hall{\pi'}{G}$. Further, the last assertion can be deduced from the fact that $H\cap X$ centralises a prefactorised Hall $\pi'$-subgroup $F$ of $G$ as in Lemma \ref{prefact}, so $H\cap X\leqslant\ce{X}{F\cap X}$ where $F\cap X\in\hall{\pi'}{X}$.
\end{proof}

\medskip

When $G=A=B$ in the previous result, the corollary below follows.

\begin{corollaryletters}
\label{corPiPi'}
Let $G$ be a $\pi$-separable group. Then the following statements are pairwise equivalent:
\begin{enumerate}
	\item[\emph{(1)}] Each $\pi$-element $x\in G$ has class size either a $\pi$-number or a $\pi'$-number.

	\item[\emph{(2)}] Either $G$ is $\pi$-decomposable or it has abelian Hall $\pi$-subgroups and its $\pi$-length is at most $1$.
	
	\item[\emph{(3)}] For every $\pi$-element $x\in G$, either all $\abs{x^G}$ are $\pi$-numbers or they are all $\pi'$-numbers.
\end{enumerate}
\end{corollaryletters}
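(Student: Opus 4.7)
The plan is to derive this corollary as the trivial-factorisation specialisation of Theorem \ref{pi-pi'}. Take $A = B = G$, which is always a core-factorisation by Remark 2.2(i), and fix a Hall $\pi$-subgroup $H$ of $G$, noting that in this case $H \cap A = H \cap B = H$. By conjugacy of Hall $\pi$-subgroups (Remark \ref{values}), condition (1) of the corollary — which ranges over all $\pi$-elements of $G$ — is equivalent to the statement that every element of $H$ has $G$-class size a $\pi$-number or a $\pi'$-number, i.e.\ condition (1) of Theorem \ref{pi-pi'}.

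Applying that theorem, (1) becomes equivalent to the dichotomy ``either $H \leqslant \ze{H}$, or $H \leqslant \ce{H}{F}$ for every $F \in \hall{\pi'}{G}$''. In the first alternative, parts (2) and (a) of Theorem \ref{pi-pi'} give that $H$ is abelian and the $\pi$-length of $G$ is at most $1$; in the second alternative, part (b) gives that $G$ is $\pi$-decomposable. This yields (1) $\Leftrightarrow$ (2).

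To close the loop with (3), the implication (3) $\Rightarrow$ (1) is immediate, so it suffices to prove (2) $\Rightarrow$ (3), and here the dichotomy in (2) matches exactly the dichotomy in (3). Indeed, if $G = H \times F$ is $\pi$-decomposable, then $F \leqslant \ce{G}{x}$ for every $\pi$-element $x$ (which lies in $H$), so $\abs{x^G}$ divides $\abs{H}$ and is a $\pi$-number. If instead $H$ is abelian and the $\pi$-length of $G$ is at most $1$, then every $\pi$-element $x$ is $G$-conjugate into some abelian Hall $\pi$-subgroup, so $H \leqslant \ce{G}{x}$ up to conjugation and $\abs{x^G}$ divides $\abs{G : H}$, a $\pi'$-number. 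Hence the alternative in (3) holds uniformly across all $\pi$-elements.

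There is really no genuine obstacle beyond bookkeeping: the technical heart of the corollary is already contained in Theorem \ref{pi-pi'}, and the only care needed is in translating the statements about $H \cap A$ and $H \cap B$ into statements about a single Hall $\pi$-subgroup, and in verifying that the ``abelian Hall $\pi$-subgroup plus $\pi$-length $\leqslant 1$'' condition in (2) really forces all $\pi$-element class sizes to be $\pi'$-numbers (and dually for the $\pi$-decomposable case).
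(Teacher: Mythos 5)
Your proposal is correct and follows exactly the route the paper intends: the paper derives Corollary~\ref{corPiPi'} simply by setting $G=A=B$ in Theorem~\ref{pi-pi'}, and your write-up just fills in the routine details (conjugacy of Hall $\pi$-subgroups to pass between $H$ and all $\pi$-elements, and parts (a), (b) of that theorem to translate the dichotomy into statement (2)). No gaps.
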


In \cite[Theorem 4]{D} (see the next theorem, which is a little reformulation) Dolfi characterised the so-called \emph{class-$\pi$-separable} groups, i.e. groups all of whose class sizes are either $\pi$-numbers or $\pi'$-numbers.

\begin{theorem}
\label{silvio}
A group $G$ is class-$\pi$-separable if and only if, up to abelian direct factors, one of the following two cases happens:
\begin{enumerate}
 \item[\emph{(1)}] $G$ is either a $\pi$-group or a $\pi'$-group.
 
 \item[\emph{(2)}] Up to interchanging $\pi$ and $\pi'$, $G=HL$ with $H\in\hall{\pi}{G}$, $L\in\hall{\pi'}{G}$, $L\unlhd G$, both $H$ and $L$ are abelian, and $G/\rad{\pi}{G}$ is a Frobenius group. Indeed, $\rad{\pi}{G}=\ze{G}$, the set of the class sizes of $G$ is $\{1, \abs{H/\rad{\pi}{G}}, \abs{L}\}$, and $G$ is soluble.
\end{enumerate}
\end{theorem}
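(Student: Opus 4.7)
The backward implication is a routine verification of the class sizes in each of the two structures, so the plan focuses on the forward direction. First, I would establish that $G$ is $\pi$-separable: every $\pi$-element $x$ of prime power order has $\abs{x^G}$ a $\pi$- or a $\pi'$-number, and in the first situation Lemma \ref{wielandt} places $x$ in $\rad{\pi}{G}$ (once Hall $\pi$-subgroups are available), while in the second $x$ centralises some Hall $\pi'$-subgroup of $G$. A minimal normal subgroup reduction and an induction on $\abs{G}$ should then force $\pi$-separability. Having this in hand, I would apply Corollary \ref{corPiPi'} to $G$, and again with $\pi$ and $\pi'$ interchanged, to obtain that either $G$ is $\pi$-decomposable, or $G$ has both abelian Hall $\pi$- and Hall $\pi'$-subgroups with corresponding lengths at most $1$.

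In the $\pi$-decomposable case $G = H \times L$, the class size of $(h,l)$ factors as $\abs{h^H}\cdot\abs{l^L}$; requiring that this be a $\pi$- or $\pi'$-number forces one factor to be trivial whenever the other is not. Choosing non-central $h \in H$ and $l \in L$ would then yield a class size divisible by primes from both $\pi$ and $\pi'$, so one of $H$ or $L$ must be abelian, and that abelian direct factor can be split off, landing us in case (1). Otherwise, up to swapping $\pi$ and $\pi'$, we may assume $L \in \hall{\pi'}{G}$ is normal and $H \in \hall{\pi}{G}$, both abelian. Since $L$ is abelian and normal, any $l \in L$ satisfies $L \leqslant \ce{G}{l}$, so $\abs{l^G}$ divides $\abs{H}$ (a $\pi$-number); and since $H$ is abelian, $\abs{h^G}$ divides $\abs{L}$ (a $\pi'$-number) for each $h \in H$.

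The remaining and most delicate step is to upgrade this to the Frobenius structure. For a mixed element $x = hl$ with $h \in H$ and $l \in L$ both non-central, the hypothesis that $\abs{x^G}$ is a $\pi$- or $\pi'$-number translates, via the commuting Hall decomposition of $x$, into the dichotomy $L \leqslant \ce{G}{h}$ or $H \leqslant \ce{G}{l}$. Pushing this through should show that $\ce{H}{L} \leqslant \ze{G}$, that $\rad{\pi}{G} = \ze{G}$, and that the induced action of $H/\ze{G}$ on $L$ is fixed-point-free away from the centre, yielding the Frobenius structure on $G/\rad{\pi}{G}$ and the solubility of $G$. The main obstacle I anticipate is precisely this last passage: carefully extracting the fixed-point-free action from the mixed-element class-size dichotomy, and verifying that the class size set is exactly $\{1, \abs{H/\rad{\pi}{G}}, \abs{L}\}$.
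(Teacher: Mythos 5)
Your overall route is close to the paper's: the paper obtains Theorem \ref{silvio} as the special case $G=A=B$ of Theorem \ref{teosilvio}, whose second half (abelian Hall subgroups via Theorem \ref{pi-pi'}, then normality of the Hall $\pi'$-subgroup, then the fixed-point-free action giving the Frobenius quotient) matches your outline. However, your first step contains a genuine gap. To place a $\pi$-element $x$ with $\abs{x^G}$ a $\pi$-number inside $\rad{\pi}{G}$ via Lemma \ref{wielandt} you need $x$ to lie in a Hall $\pi$-subgroup, and the existence of Hall $\pi$-subgroups is exactly what is not available before $\pi$-separability is proved; your parenthetical ``once Hall $\pi$-subgroups are available'' is circular. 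Nor does a ``minimal normal subgroup reduction and induction'' suffice on its own: a minimal normal subgroup could be a non-abelian simple group of order divisible by primes from both $\pi$ and $\pi'$, and one must show this is incompatible with all of its class sizes being $\pi$- or $\pi'$-numbers. The paper does this in Proposition \ref{separa-criterion} by passing to a composition series and applying It\^{o}'s Lemma \ref{ito} to each simple composition factor (if $p\in\pi$ and $q\in\pi'$ each divide some class size but $pq$ divides none, the factor is $p$- or $q$-nilpotent, impossible for a non-abelian simple group; if only one of $\pi,\pi'$ contributes, Burnside forces the factor to be a $\pi$- or $\pi'$-group). Some such arithmetic input is indispensable and is missing from your plan.

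A secondary issue: after invoking Corollary \ref{corPiPi'} you write that ``we may assume $L\in\hall{\pi'}{G}$ is normal.'' The corollary only yields abelian Hall subgroups and $\pi$- and $\pi'$-length at most $1$; normality of one of the two Hall subgroups is itself a conclusion of the theorem and must be derived from the class-size hypothesis on the mixed elements (the paper gets it from Lemma \ref{lemma_beltranfelipe} applied in $G/\rad{\pi}{G}$, after first showing that every element of $L$ has $G$-class size a $\pi$-number). Your subsequent observation that $\abs{l^G}$ divides $\abs{H}$ ``since $L$ is abelian and normal'' therefore presupposes what needs to be proved. The final passage to the Frobenius structure and the class-size set $\{1,\abs{H/\rad{\pi}{G}},\abs{L}\}$ is, as you say, the delicate part, but it does go through along the lines you sketch once the two gaps above are filled.
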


Motivated by Dolfi's result, we introduce the following factorised-group version of the concept of class-$\pi$-separability.

\begin{defin}
Let $G=AB$ be the product of two subgroups $A$ and $B$. We say that $G=AB$ is a \textbf{class-$\pi$-separable factorisation} whenever $\abs{x^G}$ is either a $\pi$-number or a $\pi'$-number for every element $x\in A\cup B$.
\end{defin}

Certainly, $G=AB$ is a class-$\pi$-separable factorisation if and only if it is a class-$\pi'$-separable factorisation. Besides, any central product of two class-$\pi$-separable groups provides a class-$\pi$-separable factorisation.

We cannot assert in a class-$\pi$-separable factorisation $G=AB$, a priori, that both $A$ and $B$ are class-$\pi$-separable groups. This is because, for $x\in A$, there is no relation in general between the sets $\pi(\abs{x^A})$ and $\pi(\abs{x^G})$. Nevertheless, under the additional assumption of being a core-factorisation, we determine in Theorem \ref{teosilvio} that this phenomenon actually occurs. To prove that fact we need firstly some preparation. The next result generalises Lemma \ref{wielandt}.

\begin{lemma}
\label{lemma_beltranfelipe}
Let $G$ be a $\pi$-separable group. If $\abs{x^G}$ is a $\pi$-number for some $x\in G$, then $(\langle x^G\rangle)'$ is a $\pi$-group. In particular, $x\in\rad{\pi,\pi'}{G}$.
	
Indeed, if $\pi$ consists of a single prime $q$, then the same statement is valid even if $G$ is not $q$-separable.
\end{lemma}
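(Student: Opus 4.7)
The plan is to reduce the main assertion to the case $\rad{\pi}{G}=1$ via the quotient $\overline{G}:=G/\rad{\pi}{G}$. Observe that $\overline{G}$ is still $\pi$-separable, the class size $\abs{\overline{x}^{\overline{G}}}$ divides $\abs{x^G}$ (so remains a $\pi$-number), and $\rad{\pi}{\overline{G}}=1$. If in $\overline{G}$ we succeed in showing that $\langle \overline{x}^{\overline{G}}\rangle$ is abelian, this immediately lifts to $(\langle x^G\rangle)'\leq \rad{\pi}{G}$, which is a $\pi$-group, as wanted.

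In the reduced setting, $\pi$-separability combined with $\rad{\pi}{G}=1$ yields the classical self-centralising property $\ce{G}{K}\leq K$, where $K:=\rad{\pi'}{G}$. Since $\abs{x^G}$ is a $\pi$-number, $\ce{G}{x}$ contains a Hall $\pi'$-subgroup of $G$ (which exists by $\pi$-separability); and because $K$ is a normal $\pi'$-subgroup, $K$ is contained in every Hall $\pi'$-subgroup of $G$. Therefore $K\leq \ce{G}{x}$, whence $x\in \ce{G}{K}\leq K=\rad{\pi'}{G}$. Unfolding the reduction, this already delivers the ``in particular'' clause $x\in \rad{\pi,\pi'}{G}$. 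Moreover, every conjugate $x^g$ again lies in $K$ and is centralised by $K$, so any two conjugates of $x$ commute; hence $\langle x^G\rangle$ is abelian and its derived subgroup is trivial, closing the main statement.

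For the addendum with $\pi=\{q\}$, the obstacle is that the self-centralising property of $\rad{q'}{G}$ is no longer guaranteed without $q$-separability, and this is where the main difficulty lies. One can still secure the inclusion $\rad{q'}{G}\leq \ce{G}{x}$ by a Sylow count: $\abs{G:\ce{G}{x}}$ is a $q$-power, so $\abs{G}_p=\abs{\ce{G}{x}}_p$ for every prime $p\neq q$, and the whole $q'$-group $\rad{q'}{G}$ therefore centralises $x$. From here I would argue by induction on $\abs{G}$ modulo $\rad{q'}{G}$, using coprime action on the image of $\langle x^G\rangle$ together with a Camina-type lemma on classes of prime-power size to control the non-$q$-separable simple sections. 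The hard part is precisely this last point: in the absence of $q$-separability one must lean on structural information about simple groups in place of the self-centralising property that drove the argument above.
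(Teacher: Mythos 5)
The paper does not actually prove this lemma: it disposes of both claims by citation (the $\pi$-separable part to \cite[Theorem C]{BF} and \cite[Lemma J(k)]{BK}, the single-prime addendum to \cite[Lemma 3]{BK}). Your argument for the main, $\pi$-separable claim is correct and complete, and it is essentially the standard proof behind those references: reduce modulo $\rad{\pi}{G}$, use $\ce{G}{\rad{\pi'}{G}}\leqslant\rad{\pi'}{G}$ together with the fact that $\ce{G}{x}$ contains a Hall $\pi'$-subgroup (hence contains $K=\rad{\pi'}{G}$) to place $x$, and then every conjugate of $x$, inside the abelianising environment $K\leqslant\ce{G}{x^g}$, so that $\langle \overline{x}^{\overline{G}}\rangle$ is abelian and $(\langle x^G\rangle)'\leqslant\rad{\pi}{G}$. (A minor simplification: $K\leqslant\ce{G}{x}$ follows already from the index $\abs{K:K\cap\ce{G}{x}}$ dividing both the $\pi$-number $\abs{G:\ce{G}{x}}$ and the $\pi'$-number $\abs{K}$, without invoking Hall subgroups of $\ce{G}{x}$.)

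The addendum for $\pi=\{q\}$ is, however, not proved. Your Sylow count does give $\rad{q'}{G}\leqslant\ce{G}{x}$ (again because $\abs{\rad{q'}{G}:\rad{q'}{G}\cap\ce{G}{x}}$ divides both $q^a$ and a $q'$-number), but without $q$-separability this yields nothing, since $\rad{q'}{G}$ need not be self-centralising and may well be trivial while $G$ is far from $q$-separable. The remainder of your sketch --- induction modulo $\rad{q'}{G}$ plus ``a Camina-type lemma \ldots to control the non-$q$-separable simple sections'' --- is exactly the content of the theorem, and you acknowledge as much. The missing ingredient is substantial: the proof of \cite[Lemma 3]{BK} rests on Kazarin's theorem that $\langle x^G\rangle$ is soluble whenever $\abs{x^G}$ is a prime power, a character-theoretic result (a Burnside-type argument with algebraic integers); once $\langle x^G\rangle$ is known to be soluble, hence $q$-separable, one can run the first part of the lemma inside it. As written, your proposal establishes the $\pi$-separable statement but leaves the single-prime claim as an unproved assertion.
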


\begin{proof}
The first claim is exactly \cite[Theorem C]{BF} (see also \cite[Lemma J(k)]{BK}), whilst the second assertion is \cite[Lemma 3]{BK}.
\end{proof}

\medskip

There are easy examples which illustrates that the first claim is not true when the $\pi$-separability hypothesis is removed (\cite{BF}).

The following well-known result is due to It\^{o}. 

\begin{lemma}\emph{\cite[Proposition 5.1]{I}}
\label{ito}
Let $G$ be a group. Suppose that $p$ and $q$ are distinct primes that divide two different conjugacy class sizes of $G$, but there is no $g\in G$ with $pq$ dividing $\abs{g^G}$. Then $G$ is either $p$-nilpotent or $q$-nilpotent.
\end{lemma}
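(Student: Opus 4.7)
The plan is to reformulate the hypothesis as a set-theoretic covering of $G$ and then argue by contradiction via Itô's classical theorem on class sizes, which asserts that if a prime $r$ divides no conjugacy class size of a finite group, then the group is $r$-decomposable (in particular $r$-nilpotent). Set $S_r:=\{g\in G : r\nmid \abs{g^G}\}$ for $r\in \{p,q\}$. Since $r\nmid \abs{g^G}=[G:\ce{G}{g}]$ is equivalent to $\ce{G}{g}$ containing a full Sylow $r$-subgroup of $G$, we have $S_r=\bigcup_{R\in\syl{r}{G}} \ce{G}{R}$, and the hypothesis that no class size of $G$ is divisible by $pq$ is then equivalent to the covering $G=S_p\cup S_q$.

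Assume for contradiction that $G$ is neither $p$-nilpotent nor $q$-nilpotent. By Itô's theorem applied to each of the primes separately, both $p$ and $q$ must divide some conjugacy class size of $G$, so neither $S_p$ nor $S_q$ exhausts $G$. This supplies witnesses $x,y\in G$ with $p\mid \abs{x^G}$ and $q\mid \abs{y^G}$; the hypothesis then forces $q\nmid \abs{x^G}$ and $p\nmid \abs{y^G}$, so some $Q\in\syl{q}{G}$ lies in $\ce{G}{x}$ and some $P\in\syl{p}{G}$ lies in $\ce{G}{y}$. This symmetric centraliser configuration is the structural datum to exploit.

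Next I would induct on $\abs{G}$: for a minimal normal subgroup $N$ of $G$, class sizes in $G/N$ divide class sizes in $G$, so the non-divisibility by $pq$ persists in $G/N$. If both primes still divide some class size in $G/N$, the inductive hypothesis yields that $G/N$ is $p$- or $q$-nilpotent; otherwise Itô's theorem applied to $G/N$ does so directly. Either way, the task reduces to lifting that nilpotency back to $G$, carried out by a coprime-action analysis on $N$ (using Schur--Zassenhaus when $N$ has order coprime to the relevant prime), combined with the centraliser configuration from the previous paragraph.

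The main obstacle will be precisely this lifting step. The class-size hypothesis does not descend cleanly to subgroups, and the covering $G=S_p\cup S_q$ is by unions of conjugates of centralisers (not by subgroups), so classical results such as Jordan's theorem on coverings by conjugates of a proper subgroup cannot be invoked naively. The resolution requires a delicate chief-factor analysis of $N$ together with the arithmetic of class sizes modulo $p$ and $q$, in order to force a numerical or structural contradiction with the assumption that $G$ fails to be $p$- or $q$-nilpotent, thus concluding the proof.
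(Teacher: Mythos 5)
There is a genuine gap: what you have written is a correct reformulation of the hypotheses plus an announced plan, not a proof. The first two paragraphs are fine — the identification $S_r=\bigcup_{R\in\syl{r}{G}}\ce{G}{R}$, the equivalence of the $pq$-condition with the covering $G=S_p\cup S_q$, and the extraction of $x,y$ with $Q\leqslant\ce{G}{x}$ for some $Q\in\syl{q}{G}$ and $P\leqslant\ce{G}{y}$ for some $P\in\syl{p}{G}$ are all correct (and the appeal to Burnside's criterion, that a prime dividing no class size forces a central Sylow subgroup, is legitimate, though redundant given that the statement already assumes both primes divide some class size). But from the third paragraph on you only describe what would have to be done: the induction on $\abs{G}$ reduces matters to lifting $p$- or $q$-nilpotency from $G/N$ to $G$, and you yourself identify this lifting as ``the main obstacle'' and leave it unresolved, gesturing at ``a delicate chief-factor analysis'' and ``the arithmetic of class sizes modulo $p$ and $q$'' without supplying either. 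That lifting step is not a routine coprime-action argument: when $N$ is a $p$-group (or non-abelian), $p$-nilpotency of $G/N$ does not transfer to $G$ by Schur--Zassenhaus, and the class-size hypothesis does not restrict to $N$ or to other subgroups, as you note. Since essentially all of the content of It\^o's theorem lives in exactly this step, the proposal cannot be accepted as a proof.

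For context, the paper does not prove this lemma at all: it is quoted verbatim from It\^o's 1953 paper (Proposition 5.1 of \cite{I}), whose argument is a substantive, several-page analysis and not a short induction of the kind you sketch. So there is no in-paper proof to measure your route against; judged on its own terms, your attempt establishes only the easy preliminary normalisations and stops where the theorem actually begins. If you want a self-contained treatment, you would need to either reproduce It\^o's argument or invoke a later structural result (for instance Dolfi's classification of class-$\pi$-separable groups, Theorem \ref{silvio} in this paper, of which the present lemma is a consequence for $\pi=\{p\}$ together with the dual choice $\pi=\{q\}$) — but the latter would be circular here, since Proposition \ref{separa-criterion} uses this lemma as an ingredient.
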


In relation to Theorem \ref{pi-pi'}, when we consider all the elements in the factors (not just those of order a $\pi$-number), we obtain the proposition below. Actually, this generalises \cite[Lemma 6]{D}.

\begin{proposition}
\label{separa-criterion}
Let $G=AB$ be the product of the subgroups $A$ and $B$, and assume that $G=AB$ is both a core-factorisation and a class-$\pi$-separable factorisation. Then $G$ is $\pi$-separable.
\end{proposition}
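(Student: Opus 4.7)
The plan is to proceed by induction on $\abs{G}$, the small cases being trivial. Since $G=AB$ is a core-factorisation, there exists a minimal normal subgroup $N$ of $G$ contained in either $A$ or $B$: this is immediate from Definition \ref{definition_core}, by taking $N$ to be the first non-trivial term of a chief series all of whose factors are covered by $A$ or by $B$. Without loss of generality we assume $N\leqslant A$.

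I would first verify that $G/N$ inherits both hypotheses so the inductive step can be applied. The core-factorisation property transfers by Lemma \ref{lemacore}, giving that $G/N=(AN/N)(BN/N)$ is again a core-factorisation. For the class-$\pi$-separable factorisation property, any element $xN\in (AN/N)\cup (BN/N)$ can be written as $x_1N$ for some $x_1\in A\cup B$ (if $xN\in AN/N$, write $x=an$ with $a\in A$, $n\in N$, and take $x_1=a$). Since $\abs{(xN)^{G/N}}$ divides $\abs{x_1^G}$, which is a $\pi$-number or a $\pi'$-number by assumption, the same holds for $\abs{(xN)^{G/N}}$. By the inductive hypothesis, $G/N$ is $\pi$-separable.

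Next I would show that $N$ itself is either a $\pi$-group or a $\pi'$-group, whereupon combining with the $\pi$-separability of $G/N$ yields that $G$ is $\pi$-separable and the proof concludes. The key observation is that since $N\leqslant A$, every $x\in N$ satisfies $\abs{x^G}$ is a $\pi$- or $\pi'$-number; as $\abs{x^N}$ divides $\abs{x^G}$, every conjugacy class size of $N$ (computed in $N$) is a $\pi$- or $\pi'$-number, i.e.\ $N$ is class-$\pi$-separable in the sense of Dolfi. I would then apply Theorem \ref{silvio} to $N$: if $N$ is abelian then $N$ is a $p$-group and we are done; if $N$ is non-abelian, then $N$ is a direct product of isomorphic non-abelian simple groups, so it has no non-trivial abelian direct factor and is not soluble, ruling out case (2) of Theorem \ref{silvio} and forcing $N$ to fall into case (1), i.e.\ to be a $\pi$-group or a $\pi'$-group.

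The main obstacle is the non-abelian case in the analysis of $N$: without invoking Dolfi's Theorem \ref{silvio}, one would have to argue directly that a non-abelian characteristically simple group all of whose conjugacy class sizes are $\pi$- or $\pi'$-numbers cannot involve primes from both $\pi$ and $\pi'$, which would demand a separate non-trivial argument (presumably using CFSG and the structure of centralisers in simple groups). Theorem \ref{silvio} packages precisely this information and lets the induction close cleanly.
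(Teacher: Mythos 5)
Your proof is correct, but it follows a genuinely different route from the paper's. You induct on $\abs{G}$, pass to $G/N$ for a minimal normal subgroup $N$ contained in a factor (both hypotheses do transfer exactly as you say), and then dispose of $N$ itself by observing it is class-$\pi$-separable and invoking Dolfi's Theorem \ref{silvio} as a black box: since a non-abelian minimal normal subgroup is insoluble with no abelian direct factors, only case (1) can occur. The paper avoids induction entirely: it refines the covered chief series to a composition series whose factors are covered by $A$ or $B$, notes that each simple composition factor $T_j/T_{j-1}$ inherits the property that all of its class sizes are $\pi$- or $\pi'$-numbers (via subnormality and passage to quotients), and then applies It\^o's Lemma \ref{ito} directly: if $p\in\pi$ and $q\in\pi'$ each divided some class size of the simple factor, no class size would be divisible by $pq$, forcing a normal $p$- or $q$-complement and contradicting simplicity; hence each composition factor is a $\pi$- or $\pi'$-group. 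Two remarks on the trade-off. First, the ``separate non-trivial argument'' you fear for the non-abelian case does not require CFSG at all --- It\^o's elementary lemma is exactly the missing ingredient, and it is what the paper uses. Second, while citing Dolfi's theorem is not logically circular (it is proved independently in \cite{D}), the paper later presents Theorem \ref{silvio} as a consequence of Theorem \ref{teosilvio}, whose proof rests on this very proposition; your route would therefore forfeit that independent re-derivation, whereas the paper's It\^o-based argument keeps the chain self-contained.
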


\begin{proof}
Since $G=AB$ is a core-factorisation, there exists a chief series $1=N_0 \unlhd N_1 \unlhd \cdots \unlhd N_{n-1} \unlhd N_n=G$  with each chief factor covered by either $A$ or $B$. In fact, we can refine that series in order to get a composition series whose factors are covered by either $A$ or $B$. Thus, for each $1\leq i \leq n$, there exist subgroups $T_j$ such that $N_{i-1}=T_0\unlhd T_1\unlhd T_2\unlhd \cdots \unlhd T_m= N_i$ and $T_j/T_{j-1}$ is simple for every $1\leq j \leq m$. We claim that each of these  $T_j/T_{j-1}$ is either a $\pi$-group or a $\pi'$-group, and so $G$ will be $\pi$-separable. Note that $T_j/T_{j-1}$ is isomorphic to $(T_j/N_{i-1})/(T_{j-1}/N_{i-1})$. Moreover $T_j/N_{i-1}$ is subnormal in $N_i/N_{i-1}$, which is normal in $G/N_{i-1}$ and it is covered by either $A$ or $B$, as $G/N_{i-1}=(AN_{i-1}/N_{i-1})(BN_{i-1}/N_{i-1})$ is a core-factorisation. Then all the class sizes of $N_i/N_{i-1}$, and so all the class sizes of $T_j/T_{j-1}$, are either $\pi$-numbers or $\pi'$-numbers. If there are two primes $p\in \pi$ and $q \in \pi'$ that divide two different class sizes of $T_j/T_{j-1}$, as $pq$ does not divide any class size of $T_j/T_{j-1}$, applying Lemma \ref{ito} we get that the simple group $T_j/T_{j-1}$ has either a normal $p$-complement or a normal $q$-complement. We deduce that either $p$ or $q$ does not divide the order of $T_j/T_{j-1}$, a contradiction. Thus, we may assume that each prime $q\in\pi'$ does not divide any class size of $T_j/T_{j-1}$, so it has a central Sylow $q$-subgroup. It follows that $q\notin\pi(T_j/T_{j-1})$ for every $q\in \pi'$, so $T_j/T_{j-1}$ is a $\pi$-group and $G$ is $\pi$-separable.
\end{proof}

\medskip

We are now ready to prove Theorem \ref{teosilvio}. We will use mainly Theorem \ref{pi-pi'} and some of Dolfi's techniques in \cite{D}.

\begin{theoremletters}
\label{teosilvio}
Let $G=AB$ be the product of the subgroups $A$ and $B$, and assume that $G=AB$ is both a core-factorisation and a class-$\pi$-separable factorisation. Then, up to abelian direct factors, one of the following two possibilities holds for any $X\in \{A, B\}$:

\begin{enumerate}
	\item[\emph{(1)}] $X$ is either a $\pi$-group or a $\pi'$-group.
	
	\item[\emph{(2)}] Up to interchanging $\pi$ and $\pi'$, $X=X_{\pi}X_{\pi'}$ where $X_{\pi}\in\hall{\pi}{X}$ and $X_{\pi'}\in\hall{\pi'}{X}$, $X_{\pi'}\unlhd X$, both $X_{\pi}$ and $X_{\pi'}$ are abelian, and $X/\rad{\pi}{X}$ is a Frobenius group. Indeed, $\rad{\pi}{X}=\ze{X}$, the class sizes of $X$ are $\{1, \abs{X_{\pi}/\rad{\pi}{X}}, \abs{X_{\pi'}}\}$, and $X$ is soluble. 
\end{enumerate}
In particular, both $A$ and $B$ are class-$\pi$-separable groups.
\end{theoremletters}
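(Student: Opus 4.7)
The plan is to reduce the entire statement to Dolfi's characterisation \ref{silvio} by first establishing the ``In particular'' clause. Fix $X\in\{A,B\}$ and any $x\in X$. Since $\ce{X}{x}=X\cap\ce{G}{x}$, the product formula yields
$$
\abs{x^X}\;=\;\abs{X:X\cap\ce{G}{x}}\;=\;\abs{X\ce{G}{x}:\ce{G}{x}},
$$
and therefore $\abs{x^X}$ divides $\abs{x^G}$. As $x\in A\cup B$, the hypothesis that $G=AB$ is a class-$\pi$-separable factorisation says that $\abs{x^G}$ is either a $\pi$-number or a $\pi'$-number, whence $\abs{x^X}$ inherits the same property. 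Consequently $X$ is a class-$\pi$-separable group in the sense of Dolfi, which is precisely the ``In particular'' assertion.

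With this in hand, the structural dichotomy is just Theorem \ref{silvio} applied verbatim to $X$: the ``up to abelian direct factors'' clause and the two alternatives (1)--(2) there transcribe exactly to the two possibilities stated here. Nothing further is needed; the core-factorisation hypothesis does not enter this divisibility argument.

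The only conceptual ``obstacle'' is noticing that the class-$\pi$-separable property of $G=AB$ passes to $X$ by the elementary index-divisibility above, bypassing the more elaborate machinery one might expect. If one wished instead to argue structurally in the spirit announced by the authors, one would proceed as follows: invoke Proposition \ref{separa-criterion} to secure that $G$ is $\pi$-separable; use Lemma \ref{prefact} to fix prefactorised Hall $\pi$- and $\pi'$-subgroups $H=(H\cap A)(H\cap B)$ and $F=(F\cap A)(F\cap B)$; apply Theorem \ref{pi-pi'} on the $\pi$-side (controlling $H\cap X$) and, by the symmetry $\pi\leftrightarrow\pi'$, on the $\pi'$-side (controlling $F\cap X$); and then recombine the four resulting cases $(H\cap X\leqslant\ze{H}$ or $\leqslant\ce{H}{F})\times(F\cap X\leqslant\ze{F}$ or $\leqslant\ce{F}{H})$ to recover either the trivial case (1) or, following Dolfi's treatment of the Frobenius complement, case (2). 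This longer route mirrors the techniques of \cite{D} but ultimately yields the same conclusion as the short argument above.
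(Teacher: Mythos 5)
Your main argument rests on a false divisibility claim. From $\ce{X}{x}=X\cap\ce{G}{x}$ and the product formula you correctly get $\abs{x^X}=\abs{X:X\cap\ce{G}{x}}=\abs{X\ce{G}{x}}/\abs{\ce{G}{x}}$, but the step ``therefore $\abs{x^X}$ divides $\abs{x^G}$'' does not follow: $X\ce{G}{x}$ is a product of two subgroups, not in general a subgroup, so the number of cosets of $\ce{G}{x}$ it contains need not divide $\abs{G:\ce{G}{x}}$. This divisibility holds when $X$ is normal (or subnormal) in $G$, but not for an arbitrary factor. A concrete counterexample: take $X=\op{Sym}(4)$ maximal in $G=\op{PSL}(2,7)$ and $x$ a transposition; then $\abs{x^X}=6$ while $\abs{x^G}=21$. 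The paper itself warns of exactly this point (``for $x\in A$, there is no relation in general between the sets $\pi(\abs{x^A})$ and $\pi(\abs{x^G})$''), and this is precisely why the theorem is not an immediate corollary of Dolfi's Theorem \ref{silvio} and why the core-factorisation hypothesis is needed. So your short route collapses at its first step, and the ``In particular'' clause is in fact the hardest part of the statement, obtained at the very end of the paper's proof as a consequence of the established structure of $X$, not as a starting point.

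Your fallback sketch points in roughly the right direction --- the paper does begin by invoking Proposition \ref{separa-criterion} to get $\pi$-separability, fixes prefactorised Hall subgroups via Lemma \ref{prefact}, and applies Theorem \ref{pi-pi'} (using that a class-$\pi$-separable factorisation is also class-$\pi'$-separable) --- but ``recombine the four resulting cases'' hides all of the substance. Theorem \ref{pi-pi'} only yields that $H\cap X$ is central in $H$ or centralises the Hall $\pi'$-subgroups (and dually for $F\cap X$); it does not by itself give the normality of $X_{\pi'}$ in $X$, the equality $\rad{\pi}{X}=\ze{X}$, or the Frobenius action. The paper obtains these through further arguments: Lemma \ref{lemma_beltranfelipe} applied in $G/\rad{\pi}{G}$ to show $X_{\pi'}\rad{\pi}{G}$ is normalised by $X$ and hence $X_{\pi'}\unlhd X$; coprime action to split off $\ce{X_{\pi'}}{X_{\pi}}$ as an abelian direct factor; and an analysis of $\abs{(xy)^G}$ for commuting $x\in X_{\pi}$, $y\in X_{\pi'}$ to show the action on $X/\rad{\pi}{X}$ is fixed-point-free. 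None of this is supplied in your sketch, so as written the proposal does not constitute a proof.
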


\begin{proof}
Observe that $G$ is $\pi$-separable by Proposition \ref{separa-criterion}. Take $H=(H\cap A)(H\cap B)\in\hall{\pi}{G}$ and $F=(F\cap A)(F\cap B)\in\hall{\pi'}{G}$ with $H\cap X\in\hall{\pi}{X}$ and $F\cap X\in\hall{\pi'}{X}$ for all $X\in \{A, B\}$ as in Lemma \ref{prefact}. Set $X_{\pi}:=H\cap X$ and $X_{\pi'}:=F\cap X$. Certainly, $X=X_{\pi}X_{\pi'}$. Let us analyse the structure of any $X\in\{A, B\}$. We may assume that $X$ has no abelian direct factors. We proceed in five steps.

\medskip

\underline{\sc{Step} 1:} Let $\sigma\in\{\pi, \pi'\}$. If every $G$-class size of elements in $X$ is a $\sigma$-number, then $X$ is a $\sigma$-group.

\medskip

Applying Theorem \ref{pi-pi'} for the elements in $X_{\sigma}$ and in $X_{\sigma'}$, we deduce $X=X_{\sigma} \times X_{\sigma'}$ with $X_{\sigma'}$ abelian. Then $X_{\sigma'}$ is an abelian direct factor of $X$, so $X_{\sigma'}=1$ and $X$ is a $\sigma$-group, as wanted.

\medskip

\noindent In particular, we may assume in the sequel that $X$ is nor a $\pi$-group nor a $\pi'$-group, and that there exist $x, y\in X$ such that $\pi(\abs{x^G})$ contains a prime in $\pi$ and $\pi(\abs{y^G})$ contains a prime in $\pi'$, respectively.

\medskip

\underline{\sc{Step} 2:} $X$ has both abelian Hall $\pi$-subgroups and Hall $\pi'$-subgroups.

\medskip

In virtue of Theorem \ref{pi-pi'}, we get that either every element in $X_{\pi}$ has $G$-class size a $\pi$-number or every element in $X_{\pi}$ has $G$-class size a $\pi'$-number. In the first case we get $X=X_{\pi} \times X_{\pi'}$ and, as we are assuming that $X_{\pi'}\neq 1$, it cannot be an abelian direct factor, so necessarily there is a non-trivial element $y\in X_{\pi'}$ with $\abs{y^G}$ a $\pi'$-number. Hence, for any $x\in X_{\pi}\smallsetminus \ze{X_{\pi}}$ we get that $\abs{(xy)^G}$ is neither a $\pi$-number nor a $\pi'$-number, a contradiction. Hence all $\abs{x^G}$ are $\pi'$-numbers for the elements $x\in X_{\pi}$  and analogously all $\abs{y^G}$ are $\pi$-numbers for the elements $y\in X_{\pi'}$. Now Theorem \ref{pi-pi'} (a) yields that $X$ has abelian Hall $\pi$-subgroups and Hall $\pi'$-subgroups, as wanted.

\medskip

\noindent Note that our class size assumptions imply $X=\ce{X}{\rad{\pi}{G}}\cup\ce{X}{\rad{\pi'}{G}}$, so we may assume $[X, \rad{\pi}{G}]=1$ in the remainder of the proof.

\medskip

\underline{\sc{Step} 3:} $X_{\pi'}$ is normal in $X$. In particular, $X$ is soluble.

\medskip

Denoting $\overline{G}:=G/\rad{\pi}{G}$, in virtue of Lemma \ref{lemma_beltranfelipe} it follows $\overline{X_{\pi'}}\leqslant \rad{\pi'}{\overline{G}}\cap \overline{X}\leqslant \rad{\pi'}{\overline{X}}.$ Since $X_{\pi'}\in\hall{\pi'}{X}$, we get $\overline{X_{\pi'}}=\rad{\pi'}{\overline{X}}$, and then $X_{\pi'}\rad{\pi}{G}$ is normalised by $X$. Now for any $x\in X$, we deduce that $X_{\pi'}^x\leqslant (X_{\pi'}\rad{\pi}{G})^x=X_{\pi'}\rad{\pi}{G}$, so there exists some $n\in \rad{\pi}{G}$ such that $X_{\pi'}^x=X_{\pi'}^n$. As $[X, \rad{\pi}{G}]=1$, then $X_{\pi'}\unlhd X$, and $X$ is soluble.

\medskip

\underline{\sc{Step} 4:} $\rad{\pi}{X}=\ze{X}$.

\medskip

Since $X_{\pi'}$ is abelian and normal in $X$, by coprime action, we deduce $X_{\pi'}=[X_{\pi'}, X_{\pi}]\times\ce{X_{\pi'}}{X_{\pi}}$. Note that $\ce{X_{\pi'}}{X_{\pi}}\leqslant \ze{X}$, so $\ce{X_{\pi'}}{X_{\pi}}$ is an abelian direct factor of $X$ and we may assume $\ce{X_{\pi'}}{X_{\pi}}=1$. Hence $X_{\pi'}\cap \ze{X}=1$ and $\ze{X}\leqslant \rad{\pi}{X}$. The other inclusion is clear because $X_{\pi'}$ is normal in $X$ and $X_{\pi}$ is abelian.

\medskip

\underline{\sc{Step} 5:} $X/\rad{\pi}{X}$ is a Frobenius group. In particular, the set of class sizes of $X$ is $\{1, \abs{X_{\pi}/\rad{\pi}{X}}, \abs{X_{\pi'}}\}$.

\medskip

Set $\tilde{X}:=X/\rad{\pi}{X}$. We claim that $\tilde{X_{\pi}}$ acts fixed-point-freely on $\tilde{X_{\pi'}}$. Take $1\neq\tilde{y}\in \tilde{X_{\pi'}}$ and $\tilde{x}\in \tilde{X_{\pi}}$ such that $[\tilde{y}, \tilde{x}]=1$. Then $[y, x]\in \rad{\pi}{X}$ and it is a $\pi'$-element since $X_{\pi'}\unlhd X$. Now $xy=yx$ and both $\abs{x^G}$ and $1\neq \abs{y^G}$ divides $\abs{(xy)^G}$. It follows necessarily that $x\in\ze{G}\cap X\leqslant\ze{X}=\rad{\pi}{X}$ so $\tilde{x}=1$ and we are done. 

Finally, note that $\rad{\pi}{X}=\ze{X}$ implies $\ze{X}\cap X' \leqslant \ze{X}\cap X_{\pi'}=1$. This fact leads to $\ce{X}{g}/\ze{X}=\ce{X/\ze{X}}{g\ze{X}}$ for all $g\in X$, so the class sizes of $X$ and $X/\ze{X}$ coincides. Since $X/\ze{X}$ is a Frobenius group, then the set of class sizes of $X$ is $\{1, \abs{X_{\pi}/\rad{\pi}{X}}, \abs{X_{\pi'}}\}$.

To conclude, from the described structure of $X$, we get that $X$ is a class-$\pi$-separable group.
\end{proof}

\medskip

Observe that \cite[Theorem 4]{D} (Theorem \ref{silvio} above) is now a direct consequence of the previous theorem when $G=A=B$.

\begin{example}
A core-factorisation of two class-$\pi$-separable groups might not be a class-$\pi$-separable factorisation: Let $M$ be the direct product of a non-abelian group of order 21 and a cyclic group of order 5. Consider the action of a cyclic group $N$ of order 2 on $M$, in such way that $N$ acts trivially on the elements of order 3. If we take $\pi:=\{3, 7\}$, $A\in\hall{\pi}{G}$ and $B\in\hall{\pi'}{G}$, then $G=AB$ is a core-factorisation since $G$ is $\pi$-separable (indeed it is soluble). Certainly, $G=AB$ is not a class-$\pi$-separable factorisation, although $A$ and $B$ are class-$\pi$-separable groups.
\end{example}

The next example illustrates that we cannot expect to obtain in a class-$\pi$-separable factorisation, for instance, either that the Hall $\pi$- and $\pi'$-subgroups of $G$ are abelian or that $G$ is soluble.

\begin{example}
Let $A$ be an alternating group of degree $5$ and $B$ the semidirect product of a cyclic group of order 29 acted on by a cyclic group of order 7. Consider $G=A\times B$, and $\pi=\{2,3,5\}$. Clearly, every element in $A\cup B$ has class size equal to a $\pi$-number or a $\pi'$-number, but $G$ is not soluble. Moreover, neither the Hall $\pi$-subgroup nor the Hall $\pi'$-subgroup of $G$ is abelian.
\end{example}

To conclude, inspired by \cite{BF}, we concentrate on factorised groups whose $\pi$-elements in the factors have conjugacy class lengths equal to prime powers. Indeed, in \cite{FMOprime} we analysed products of groups where, for a given prime $p$, the $p$-elements in the factors have prime power class sizes.

\begin{proposition}\emph{\cite[Theorems A and B]{FMOprime}}
\label{pBaer}
Let $G=AB$ be the product of the subgroups $A$ and $B$, and let $P\in\syl{p}{G}$. Assume that $\abs{g^G}$ is equal to a prime power for each $p$-element $g\in A\cup B$. Then we have:
	\begin{itemize}
		\item[\emph{(1)}]  $P\fit{G}$ is normal in $G$.
		
		\item[\emph{(2)}]  There exist unique primes $q$ and $r$ such that $\abs{x^G}$ is a $q$-number for every $p$-element $x\in A$, and $\abs{y^G}$ is an $r$-number for every $p$-element $y\in B$, respectively. (Eventually, $p\in\{q, r\}$ or $q = r$.)
	\end{itemize} 
\end{proposition}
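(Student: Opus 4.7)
The plan is to adapt the classical Baer--Kazarin machinery to the factorised setting, exploiting the second (single-prime) assertion of Lemma \ref{lemma_beltranfelipe}, which avoids any $\pi$-separability hypothesis. I would first establish (2), and then deduce (1) from it.

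For (2), I would fix the factor $A$ and argue by contradiction. Suppose there are $p$-elements $x,y\in A$ with $\abs{x^G}=q_1^a$ and $\abs{y^G}=q_2^b$ for distinct primes $q_1\neq q_2$. Up to $A$-conjugation (which preserves $G$-class sizes), one may assume that $x$ and $y$ share a common Sylow $p$-subgroup $P_A$ of $A$; then $xy$ and every commutator $[x^i,y^j]$ is again a $p$-element of $A$, so has prime-power $G$-class size by hypothesis. The inclusion $\ce{G}{x}\cap\ce{G}{y}\leqslant\ce{G}{xy}$ yields $\abs{(xy)^G}\mid q_1^a q_2^b$, hence $\abs{(xy)^G}$ is a $q_1$- or $q_2$-power. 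Lemma \ref{lemma_beltranfelipe} also supplies $(\langle x^G\rangle)'\leqslant\rad{q_1}{G}$ and $(\langle y^G\rangle)'\leqslant\rad{q_2}{G}$, together with full Hall $q_1'$- and $q_2'$-subgroups of $G$ sitting inside $\ce{G}{x}$ and $\ce{G}{y}$ respectively. Combining these structural inclusions with the prime-power constraints across the whole family of products and commutators of $x$ and $y$ in $P_A$ should force $q_1=q_2$ (the easy case $p\in\{q_1,q_2\}$ is treated separately). The symmetric argument for $B$ yields the prime $r$.

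For (1), let $q,r$ be the primes produced by (2). Lemma \ref{lemma_beltranfelipe} gives $(\langle x^G\rangle)'\leqslant\rad{q}{G}\leqslant\fit{G}$ for every $p$-element $x\in A$, and analogously with $r$ for $y\in B$. I would proceed by induction on $\abs{G}$: the base case $\fit{G}=1$ is immediate, since then each $\langle x^G\rangle$ is an abelian normal subgroup of $G$, hence contained in $\fit{G}=1$, forcing $P=1$ and making the conclusion trivial. For the inductive step, in $\overline{G}=G/\fit{G}$ each image $\overline{\langle x^G\rangle}$ is an abelian normal $p$-subgroup of $\overline{G}$ (being generated by images of $p$-elements), and their product is a normal $p$-subgroup of $\overline{G}$ contained in $\fit{\overline{G}}$. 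Combining this with the factorisation $\overline{G}=\overline{A}\,\overline{B}$ and a Baer-type Sylow analysis of $\overline{G}$ modulo $\fit{\overline{G}}$, one places $\overline{P}\leqslant\fit{\overline{G}}$, so $\overline{P}\unlhd\overline{G}$, that is, $P\fit{G}\unlhd G$.

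The main obstacle will be the uniqueness step in (2). The divisibility $\abs{(xy)^G}\mid q_1^a q_2^b$ is far from enough on its own, and reaching a genuine contradiction requires carefully tracking the commutator structure modulo $\rad{q_1}{G}\,\rad{q_2}{G}$ together with the Hall-centraliser data inside $\ce{G}{x}$ and $\ce{G}{y}$, pushing the $\pi$-separability-free version of Lemma \ref{lemma_beltranfelipe} as far as it will go.
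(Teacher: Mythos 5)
First, a point of reference: the paper does not prove Proposition \ref{pBaer} at all --- it is imported verbatim from \cite[Theorems A and B]{FMOprime} --- so there is no internal proof to compare yours against, and your attempt must stand on its own. It does not, and the failure is exactly where you yourself flag ``the main obstacle'': the uniqueness claim in (2). For $p$-elements $x,y\in A$ with $\abs{x^G}=q_1^{a}$, $\abs{y^G}=q_2^{b}$, $q_1\neq q_2$, $a,b\geq 1$, the only thing you actually establish is $\abs{(xy)^G}\mid q_1^{a}q_2^{b}$, and this is perfectly compatible with $\abs{(xy)^G}$ being a prime power (it can even be $1$, e.g.\ if $xy$ is central), so no contradiction follows. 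The structural tension is this: to force $\abs{(xy)^G}=q_1^{a}q_2^{b}$ you need $x$ and $y$ to \emph{commute} and have coprime class sizes; the classical device for arranging commutativity (when $q_1,q_2\neq p$, each of $\ce{G}{x}$, $\ce{G}{y}$ contains a full Sylow $p$-subgroup of $G$, so a suitable conjugate $y^{g}$ lies with $x$ in the centre of a common Sylow $p$-subgroup of $G$) produces an element $y^{g}$ that has no reason to lie in $A\cup B$, so the hypothesis does not apply to $xy^{g}$. Your alternative --- placing $x,y$ in one Sylow $p$-subgroup of $A$ --- preserves membership in $A$ but sacrifices commutativity, and ``tracking the commutator structure modulo $\rad{q_1}{G}\rad{q_2}{G}$'' is a hope, not an argument. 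Bridging precisely this conflict is the whole content of \cite[Theorem B]{FMOprime}; it needs further input of the type of Lemma \ref{lemmaBKprime} (which pins $\abs{(xy)^G}$ down to $\max\{\abs{x^G},\abs{y^G}\}$ and forces the corresponding prime into $\pi$), typically applied after the $p$-separability coming from part (1). A further slip: you invoke ``full Hall $q_1'$-subgroups of $G$'' inside $\ce{G}{x}$, but an arbitrary finite group need not possess Hall $q_1'$-subgroups; from $\abs{x^G}=q_1^{a}$ you only get a full Sylow $s$-subgroup of $G$ in $\ce{G}{x}$ for each prime $s\neq q_1$.

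Your outline of part (1), by contrast, is essentially sound and, usefully, independent of (2): for each $p$-element $x\in A\cup B$ with $\abs{x^G}$ a $q$-power, the single-prime case of Lemma \ref{lemma_beltranfelipe} gives $(\langle x^G\rangle)'\leqslant\rad{q}{G}\leqslant\fit{G}$, so in $\overline{G}=G/\fit{G}$ the image of $\langle x^G\rangle$ is an abelian normal subgroup generated by $p$-elements, hence a normal $p$-subgroup contained in $\rad{p}{\overline{G}}$. What is missing from your write-up is the justification that a Sylow $p$-subgroup of $G$ is generated by $p$-elements of $A\cup B$: this is where the prefactorised Sylow subgroup $P_0=(P_0\cap A)(P_0\cap B)$ of \cite[1.3.2]{AMB} (valid for any factorisation when $\pi=\{p\}$) must enter, yielding $\overline{P_0}\leqslant\rad{p}{\overline{G}}$ and then $P\fit{G}=P_0\fit{G}\unlhd G$ by Sylow conjugacy; once this is said, your induction on $\abs{G}$ is superfluous.
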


Then, we will use the above proposition for the primes $p\in \pi$, joint with the next significant lemma and also some stated results in previous sections in order to prove Theorem \ref{teoprime} below.

\begin{lemma}
\label{lemmaBKprime}
Let $G$ be a group, and let $x, y\in G\smallsetminus \ze{G}$ be $\pi$-elements such that $\abs{x^G}$ and $\abs{y^G}$ are two distinct prime powers, and assume that $\abs{(xy)^G}$ is also a prime power. Then $\langle x, y \rangle^G\leqslant \rad{\pi}{G}$, and $\abs{(xy)^G}=\op{max}\{\abs{x^G}, \abs{y^G}\}$ is a power of a prime $q\in \pi$. In particular, if $\hall{\pi}{G}\neq \emptyset$, then a Hall $\pi$-subgroup of $G$ is non-abelian.
\end{lemma}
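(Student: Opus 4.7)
The plan is as follows. Write $\abs{x^G} = p^a$, $\abs{y^G} = r^b$ and $\abs{(xy)^G} = s^c$ with $p$, $r$, $s$ primes and $a, b \geqslant 1$ (using $x, y \notin \ze{G}$). First I would establish a Sylow-orbit lower bound: since $\abs{G:\ce{G}{y}} = r^b$, when $p \neq r$ the centraliser $\ce{G}{y}$ contains a full Sylow $p$-subgroup $P$ of $G$, and $P$ centralises $y$. The $P$-orbit $(xy)^P = x^P \cdot y$ has cardinality $\abs{x^P}$, and a divisibility argument on $\abs{P \cap \ce{G}{x}}$ forces $\abs{x^P} = p^a$; hence $p^a \leqslant \abs{(xy)^G}$ from $(xy)^P \subseteq (xy)^G$. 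Symmetrically, using a Sylow $r$-subgroup $R \leqslant \ce{G}{x}$, one obtains $r^b \leqslant \abs{(xy)^G}$, so $\abs{(xy)^G} \geqslant \max\{\abs{x^G}, \abs{y^G}\}$. The case $p = r$ I would handle by picking an auxiliary prime $t \neq p$ with a Sylow $t$-subgroup contained in both $\ce{G}{x}$ and $\ce{G}{y}$, which forces $\abs{(xy)^G}$ to be a $p$-power.

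The crux is upgrading this to equality and identifying the prime; this is the step I expect to be the main obstacle. My approach here is to apply Lemma \ref{lemma_beltranfelipe} in its single-prime form (which bypasses $\pi$-separability) to each of $x$, $y$ and $xy$: the derived subgroups $(\langle x^G\rangle)'$, $(\langle y^G\rangle)'$ and $(\langle (xy)^G\rangle)'$ are respectively $p$-, $r$- and $s$-groups, hence contained in $\rad{p}{G}$, $\rad{r}{G}$ and $\rad{s}{G}$. Using $xy \in \langle x^G\rangle \cdot \langle y^G\rangle$ together with the fact that every $R$-orbit inside the prime-power class $(xy)^G$ is an $r$-power (and every $P$-orbit a $p$-power), one shows $s \in \{p, r\}$; combined with the lower bounds above, this pins down $s^c = \max\{p^a, r^b\}$. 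The delicate point is ruling out a prime-power class strictly larger than this maximum: the orbit-counting constraints together with the structural placement of the three derived subgroups in the corresponding $O$-subgroups of $G$ leave no room for such a larger value.

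With the equality in hand, the prime $q := s$ coincides with the prime of the larger of $\abs{x^G}, \abs{y^G}$, and $q \in \pi$ follows because the corresponding element is a non-central $\pi$-element whose class size being a $\pi'$-prime power would contradict Lemma \ref{wielandt} inside any Hall $\pi$-subgroup containing it. For the containment $\langle x, y\rangle^G \leqslant \rad{\pi}{G}$, I would argue that $\langle x^G\rangle$ and $\langle y^G\rangle$ are normal $\pi$-subgroups of $G$: each is generated by $\pi$-elements and its derived subgroup is a $\pi$-group by the above, so both normal closures sit inside $\rad{\pi}{G}$. Finally, for the ``In particular'' clause: if $H \in \hall{\pi}{G}$ were abelian, then after $G$-conjugating $x$ and $y$ into $H$ (possible since $x, y \in \rad{\pi}{G} \leqslant H$), abelianness would force $H \leqslant \ce{G}{x} \cap \ce{G}{y}$, so $\abs{x^G}$ and $\abs{y^G}$ would both be $\pi'$-numbers, contradicting the non-triviality of $\max\{\abs{x^G}, \abs{y^G}\} = q^c$ with $q \in \pi$.
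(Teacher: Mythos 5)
The paper does not actually write out a proof of this lemma: it just says the argument is obtained by mimicking \cite[Lemma 4]{BK} with $\pi$ in place of a single prime $p$. Your sketch is an attempt to reconstruct that argument, and the portions you work out in detail are sound: if $p\nmid\abs{y^G}$ then some Sylow $p$-subgroup $P$ of $G$ lies in $\ce{G}{y}$, the orbit $(xy)^P=x^Py$ has size exactly $p^a$ (since $\abs{\ce{P}{x}}\leq\abs{\ce{G}{x}}_p=\abs{P}/p^a$), whence $\abs{(xy)^G}\geq\op{max}\{\abs{x^G},\abs{y^G}\}$ when $p\neq r$; and the reduction of the case $p=r$ to a $p$-power class size via a Sylow $t$-subgroup inside $\ce{G}{x}\cap\ce{G}{y}$ is also correct.

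However, each of the three conclusions that constitute the lemma is left unproved. First, the equality $\abs{(xy)^G}=\op{max}\{\abs{x^G},\abs{y^G}\}$ is the hard half of the statement, and you defer it entirely to unspecified ``orbit-counting constraints'' that ``leave no room'' for a strictly larger prime power; no argument is actually given. Second, the deduction that $q\in\pi$ from Lemma \ref{wielandt} does not work: that lemma draws a conclusion when a class size \emph{is} a $\pi$-number, whereas a non-central $\pi$-element whose class size is a $\pi'$-number contradicts nothing (a transposition in $\op{Sym}(3)$ with $\pi=\{2\}$ has class size $3$); moreover, outside the final clause the lemma does not assume $\hall{\pi}{G}\neq\emptyset$, so ``any Hall $\pi$-subgroup containing it'' need not exist. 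Third, for $\langle x,y\rangle^G\leqslant\rad{\pi}{G}$ you need $\langle x^G\rangle$ and $\langle y^G\rangle$ to be $\pi$-groups, and knowing that $(\langle x^G\rangle)'$ is a $p$-group while the abelian quotient is generated by $\pi$-elements does not give this when $p\notin\pi$: again $\op{Sym}(3)=\langle x^{\op{Sym}(3)}\rangle$ for a transposition $x$ and $\pi=\{2\}$, with derived subgroup of order $3$. (Note also that you only discuss membership in $\pi$ for the prime $s$ of $\abs{(xy)^G}$, never for $p$ and $r$.) So the genuine content of the lemma --- forcing the relevant primes into $\pi$ and capping $\abs{(xy)^G}$ at the maximum --- is precisely what is missing; this is the substance of \cite[Lemma 4]{BK}, which the paper cites rather than reproves.
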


\begin{proof}
It is enough to mimic the proof of \cite[Lemma 4]{BK} with $\pi$ instead of $p$.
\end{proof}

\begin{theoremletters}
\label{teoprime}
Let $G=AB$ be a core-factorisation. Suppose that $\abs{x^G}$ is a prime power for every $\pi$-element $x\in A\cup B$. Then $G$ is $\pi$-separable of $\pi$-length at most $1$. Moreover, for each $X\in\{A, B\}$, one of the following two possibilities holds:
\begin{enumerate}
	\item[\emph{(1)}] All $\abs{x^G}$ are powers of a fixed prime $q$ for every $\pi$-element $x\in X$. In addition: 
	\begin{enumerate}
		\item[\emph{(a)}] If $q\notin \pi$, then $X$ has an abelian Hall $\pi$-subgroup $X_{\pi}$. In this case $X_{\pi}\rad{q}{G}$ is normalised by $X$.
		
		\item[\emph{(b)}] If $q\in \pi$, then $X$ is $\pi$-decomposable with nilpotent Hall $\pi$-subgroup $X_{\pi}$ and the Sylow subgroups of $X_{\pi}$ are all abelian except possibly for the prime $q$.
	\end{enumerate}
	
	\item[\emph{(2)}] All $\abs{x^G}$ are powers of two distinct fixed primes $q$ and $r$, both in $\pi$, for every $\pi$-element $x\in X$. In this case, $X$ is $\pi$-decomposable, and the Hall $\pi$-subgroup $X_{\pi}$ of $X$ satisfies that $X_{\pi}/\ze{X_{\pi}}$ is a Frobenius group with abelian kernel and complement of orders a $q$-power and an $r$-power, respectively.
\end{enumerate}
\end{theoremletters}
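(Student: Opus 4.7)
The plan is to reduce the statement to Theorem \ref{pi-pi'} once $\pi$-separability is established, and then to refine the ensuing $\pi/\pi'$-dichotomy into the sharper prime-power dichotomy by combining Proposition \ref{pBaer} (applied one prime at a time) with Lemma \ref{lemmaBKprime}. The hardest step is the opening one: Proposition \ref{separa-criterion} requires a class-size condition on every element of the factors, while here we only control the $\pi$-elements. My plan to bootstrap $\pi$-separability is to run a minimal-counterexample argument, using on the one hand that for each $p\in\pi$, Proposition \ref{pBaer} already gives $P\fit{G}\trianglelefteq G$ for $P\in\syl{p}{G}$, and on the other that any $\pi$-element in $A\cup B$ with $G$-class size a $\pi$-number lies in $\rad{\pi}{G}$ by Lemma \ref{wielandt}, whereas those with $G$-class size a $\pi'$-number lie in $\rad{\pi,\pi'}{G}$ by Lemma \ref{lemma_beltranfelipe}; the hypothesis passes to quotients via Lemma \ref{lemacore}, in the spirit of the proof of Lemma \ref{lemma_separability}. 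Once $\pi$-separability is in hand, the bound $\pi$-length at most $1$ follows from Theorem \ref{pi-pi'} applied to $G=AB$, since in every combination of its alternatives on $A$ and $B$ a prefactorised Hall $\pi$-subgroup of $G$ is either abelian or centralises a prefactorised Hall $\pi'$-subgroup.

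With $G$ $\pi$-separable, Theorem \ref{pi-pi'} yields for each $X\in\{A,B\}$ that the $G$-class sizes of $\pi$-elements of $X$ are all $\pi$-numbers or all $\pi'$-numbers. To refine this, Proposition \ref{pBaer}(2) supplies, for every prime $p$ dividing $\abs{X_\pi}$, a unique prime $q_p$ such that every $p$-element of $X$ has $G$-class size a $q_p$-power. Setting $\Pi_X:=\{q_p\}$, I claim $\abs{\Pi_X}\leq 2$, and $\Pi_X\subseteq\pi$ whenever $\abs{\Pi_X}=2$: indeed, for distinct $p_1,p_2$ with $q_{p_1}\neq q_{p_2}$, taking commuting non-central prime-power $p_i$-elements $x_i\in X_\pi$ and applying Lemma \ref{lemmaBKprime} to $x_1x_2$ forces $\abs{(x_1x_2)^G}=\max\{\abs{x_1^G},\abs{x_2^G}\}$ and the corresponding prime to lie in $\pi$, so $q_{p_1},q_{p_2}\in\pi$; a third distinct $q_{p_3}$ would contradict the same lemma applied to a triple product. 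Thus $\abs{\Pi_X}=1$ yields case (1) and $\abs{\Pi_X}=2$ yields case (2).

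For the structural conclusions in case (1) with $\Pi_X=\{q\}$, when $q\notin\pi$ Theorem \ref{pi-pi'}(a) gives that $X_\pi$ is abelian; the normality of $X_\pi\rad{q}{G}$ in $X$ then follows from Lemma \ref{lemma_beltranfelipe} applied with $\pi=\{q\}$, which places the normal closure in $G$ of each $x\in X_\pi$ inside $\rad{q,q'}{G}$ and, combined with the abelianity of $X_\pi$, inside $X_\pi\rad{q}{G}$. When $q\in\pi$, Theorem \ref{pi-pi'}(b) gives $X=X_\pi\times X_{\pi'}$; for each $p\in\pi\smallsetminus\{q\}$, every $p$-element $y\in X_\pi$ has $\abs{X_\pi : \ce{X_\pi}{y}}$ a $p'$-number and, being a $p$-power, equal to $1$, so the Sylow $p$-subgroup of $X_\pi$ is abelian, and a standard Ito-type argument then yields $X_\pi$ nilpotent. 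In case (2) with $\Pi_X=\{q,r\}\subseteq\pi$, again $X=X_\pi\times X_{\pi'}$, and the Frobenius structure of $X_\pi/\ze{X_\pi}$ emerges by adapting Steps 4--5 of the proof of Theorem \ref{teosilvio}: the centre absorbs the elements with trivial $G$-class, while Lemma \ref{lemmaBKprime} prevents non-central $q$- and $r$-elements of $X_\pi$ from commuting outside $\ze{X_\pi}$, so that modulo the centre an abelian Sylow subgroup acts fixed-point-freely on the other. The principal obstacle I foresee remains the bootstrapping of $\pi$-separability from a hypothesis confined to $\pi$-elements; once that is overcome, the remainder is a careful bookkeeping of the prime-by-prime information.
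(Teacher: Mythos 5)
Your overall architecture matches the paper's (Proposition \ref{pBaer} applied prime by prime, Theorem \ref{pi-pi'} for the $\pi$/$\pi'$ dichotomy, Lemma \ref{lemmaBKprime} to control the primes), but the step you yourself flag as the principal obstacle --- bootstrapping $\pi$-separability --- is left genuinely unresolved, and the route you sketch for it cannot work as written: the general-$\pi$ form of Lemma \ref{lemma_beltranfelipe} \emph{presupposes} that $G$ is $\pi$-separable, and Lemma \ref{wielandt} presupposes the existence of a Hall $\pi$-subgroup, so neither is available inside a minimal counterexample to $\pi$-separability. The resolution is in fact immediate from the ingredient you already quote: since $P\fit{G}\unlhd G$ for every $p\in\pi$ and $P\in\syl{p}{G}$ by Proposition \ref{pBaer}(1), the quotient $G/\fit{G}$ has a normal Sylow $p$-subgroup for each $p\in\pi$, hence a normal Hall $\pi$-subgroup; as $\fit{G}$ is nilpotent, $G$ is $\pi$-separable of $\pi$-length at most $1$, with no counterexample argument needed. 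This is exactly how the paper opens its proof, and it also supersedes your later attempt to extract the $\pi$-length bound from the case analysis of Theorem \ref{pi-pi'}.

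Two further points need repair. First, in bounding $\abs{\Pi_X}$ you ``take commuting non-central prime-power elements'' whose class sizes are powers of distinct primes; such commuting pairs do not exist for free. The paper manufactures them (in its Step 1, where \emph{three} primes are in play) by noting that for some pair the prime of $\abs{x_1^G}$ differs from the prime of $\op{o}(x_2)$, so a Sylow subgroup containing $x_2$ lies in a conjugate of $\ce{G}{x_1}$, and coprimality of the class sizes then forces $x_2\in\ce{G}{x_1}$; your ``triple product'' contradiction is not spelled out, and Lemma \ref{lemmaBKprime} alone does not produce one. (For showing $\{q,r\}\subseteq\pi$ no commuting is needed: for $x,y\in H\cap X$ the product $xy$ is automatically a $\pi$-element of $X$, which is how the paper's Step 2 proceeds.) Second, in case (1)(b) the index $\abs{X_{\pi}:\ce{X_{\pi}}{y}}$ of a $p$-element need not be a $p$-power, so your deduction collapses as stated (one must work inside a Sylow $p$-subgroup, and nilpotency of $X_{\pi}$ still requires an argument: the paper obtains $X_{\pi}=X_q\times\rad{\sigma}{X}$ by applying Theorem \ref{pi-pi'}(b) to the single prime $q$ and then (a) to $\sigma=\pi\smallsetminus\{q\}$); and in case (2) your adaptation of Steps 4--5 of Theorem \ref{teosilvio} does not address why one Sylow subgroup of $X_{\pi}$ is normal with abelian quotient data --- the paper instead first shows $X_{\pi}$ or $H=X_{\pi}Y_{\pi}$ is normal in $G$ and then invokes Theorem \ref{silvio} or Theorem \ref{teosilvio}.
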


\begin{proof}
First of all, we prove the assertion on the $\pi$-separability of $G$. Applying Proposition \ref{pBaer} (1) for each prime $p\in \pi$, we can affirm that $G/\fit{G}$ has a normal Sylow $p$-subgroup. Therefore, $G$ is $p$-separable with $p$-length at most $1$ for each prime $p\in \pi$ and, in particular, it is $\pi$-separable with $\pi$-length at most $1$. Henceforth, we can take with $H=(H\cap A)(H\cap B)\in\hall{\pi}{G}$ with $H\cap A\in\hall{\pi}{A}$ and $H\cap B\in\hall{\pi}{B}$. 

Next we assume that every $\abs{x^G}$ is a power of a fixed prime $q$ for the $\pi$-elements in $H\cap X$, as in case (1). If $q\notin \pi$, then we obtain that $X$ has an abelian Hall $\pi$-subgroup $X_{\pi}:=H\cap X$ in virtue of Theorem \ref{pi-pi'} (a). Let us prove that $X_\pi\rad{q}{G}$ is normalised by $X$. Let us denote by bars the quotients over $\rad{q}{G}$. Thus, Lemma \ref{lemma_beltranfelipe} applied to each element $x\in X_{\pi}$ yields that $\overline{X_{\pi}}\leqslant \rad{q'}{\overline{G}}\cap \overline{X}\leqslant \rad{q'}{\overline{X}}.$ Since $q\notin \pi$ and $X_{\pi}\in\hall{\pi}{X}$, then $\overline{X_{\pi}}=\rad{q'}{\overline{X}}$. Now clearly $X_{\pi}\rad{q}{G}$ is normalised by $X$, as wanted in (a).

If $q\in \pi$, then Theorem \ref{pi-pi'} (b) provides that $X=X_{\pi} \times \rad{\pi'}{X}$, so it remains to show that $X_{\pi}$ is nilpotent with abelian Sylow subgroups, except possibly for the prime $q$. Recall that $G$ is $p$-separable for every prime $p\in \pi$. Hence, Theorem \ref{pi-pi'} (b) applied for the prime $q$ gives $X=X_q\times \rad{q'}{X}$, so $X_{\pi}=X_q\times \rad{\sigma}{X}$ where $\sigma:=\pi \smallsetminus \{q\}$. Finally, $\rad{\sigma}{X}$ is abelian in virtue again of Theorem \ref{pi-pi'} (a) applied for $\sigma$, and (b) is proved.

\medskip

\noindent From now on, we assume that (1) does not hold and we demonstrate case (2) in three steps.

\medskip

\underline{\sc{Step} 1:} At most two different primes appear as divisors of the class sizes of the $\pi$-elements in $X$.

\medskip

Assuming the contrary, there exists three non-central $\pi$-elements in $X$, say $x_1, x_2$ and $x_3$, such that their $G$-class sizes are equal to powers of three different primes, say $p_1, p_2$ and $p_3$, respectively. All the $x_i$ decompose as product of commuting prime power order ($\pi$-)elements, so Proposition \ref{pBaer} (2) joint with this last fact allow us to suppose that the orders of the $x_i$ are coprime prime powers. Hence, from now on we assume that $x_i$ is a $q_i$-element with $\abs{x_i^G}$ equal to a $p_i$-power, for each $i\in\{1,2,3\}$. Since either $p_1\neq q_2$ or $p_1\neq q_3$, we may assume the first case and so there exists $g\in G$ such that $Q^g\leqslant \ce{G}{x_1}$, where $Q=(Q\cap A)(Q\cap B)\in\syl{q_2}{G}$,  $Q\cap A\in\syl{q_2}{A}$ and $Q\cap B\in\syl{q_2}{B}$. We may suppose $x_2\in Q\cap X$ by Remark \ref{values}, and so we get $x_2^g\in \ce{G}{x_1}$. But $(\abs{x_1^G}, \abs{x_2^G})=1$, so $G=\ce{G}{x_1}\ce{G}{x_2}$ and we obtain $x_2\in \ce{G}{x_1}$. Now $x_1x_2=x_2x_1\in X$ is a $\pi$-element, and it follows that $\abs{(x_1x_2)^G}$ is divisible by both $p_1$ and $p_2$, a contradiction.

\medskip

\underline{\sc{Step} 2:} Assuming that all $\abs{x^G}$ are powers of two distinct fixed primes $q$ and $r$ for every $\pi$-element $x\in X$, we claim that $\{q, r\}\subseteq \pi$.

\medskip

Let $x$ and $y$ be $\pi$-elements in $X$ such that $\abs{x^G}$ is a non-trivial $q$-power and $\abs{y^G}$ is a non-trivial $r$-power. Again by Remark \ref{values}, we can assume without loss of generality that $x, y\in H\cap X$. Hence $xy\in H\cap X$ and $\abs{(xy)^G}$ is a prime power also. Thus, in virtue of Lemma \ref{lemmaBKprime} we have that the prime which corresponds to the largest class size lies in $\pi$. So let us suppose that the largest one is $\abs{x^G}$, that is, $q\in \pi$. If $r\notin\pi$, then there exists $g\in G$ such that $x^g\in H^g\leqslant \ce{G}{y}$. Also $G=\ce{G}{x}\ce{G}{y}$, so $xy=yx\in H\cap X$ and we conclude that $\abs{(xy)^G}$ is divisible by both $q$ and $r$, a contradiction.

\medskip

\underline{\sc{Step} 3:} $X$ is $\pi$-decomposable, and the Hall $\pi$-subgroup $X_{\pi}$ of $X$ satisfies that $X_{\pi}/\ze{X_{\pi}}$ is a Frobenius group with abelian kernel and complement of orders a $q$-power and an $r$-power, respectively.

\medskip

Since we are assuming that all $\abs{x^G}$ are powers of two distinct fixed primes $q$ and $r$, both in $\pi$, for every $\pi$-element $x\in X$, then by Theorem \ref{pi-pi'} we get that $X_{\pi}$ centralises every Hall $\pi'$-subgroup of $G$. Indeed it is $\pi$-decomposable. For proving the remaining assertion, we distinguish two cases on the class sizes of the $\pi$-elements in $Y$, where $\{X, Y\}=\{A, B\}$: either they are powers of a prime in $\pi$ or in $\pi'$. In the second case, by Theorem \ref{pi-pi'} we obtain $Y_{\pi}:=H\cap Y\leqslant \ze{H} \leqslant \ce{H}{X_{\pi}}$. As $X_{\pi}$ centralises every Hall $\pi'$-subgroup of $G$, it follows that $X_{\pi}$ is normal in $G$. Then all the $X_{\pi}$-class sizes of elements in $X_{\pi}$ are either $q$-powers or $r$-powers, and Theorem \ref{silvio} yields the desired structure of $X_{\pi}/\ze{X_{\pi}}$. In the other case, $Y_{\pi}$ also centralises every Hall $\pi'$-subgroup of $G$, so $H=X_{\pi}Y_{\pi}$ is normal in $G$. We deduce that the class sizes in $H$ of all elements in $X_{\pi}\cup Y_{\pi}$ are either $q$-powers or $r$-powers. But we may affirm that $H=X_{\pi}X_{\pi'}$ is a core-factorisation in virtue of Lemma \ref{prefact}, so Theorem \ref{teosilvio} applied to $H$ completes the proof of (2).
\end{proof}

\medskip

The main result of \cite{BF} now can be retrieved from the above theorem (see the corollary below). It is significant to notice that our proof, however, uses different tools.

\begin{corollary}
Let $G$ be a group for which every $\abs{x^G}$ is a prime power for the $\pi$-element $x \in G$. Then one of the following possibilities occurs.
\begin{itemize}
	\item[\emph{(a)}] All $\abs{x^G}$ are powers of a fixed prime $q$. Moreover,
	\begin{itemize}
		\item[\emph{(1)}] $q\notin \pi$ if and only if $G$ has an abelian Hall $\pi$-subgroup $H$. In this case, $H\rad{q}{G}\unlhd G$.
		
		\item[\emph{(2)}] $q\in \pi$ if and only if $G$ is $\pi$-decomposable with nilpotent Hall $\pi$-subgroup $H$, and the Sylow subgroups of $H$ are all abelian except possibly for the prime $q$.
	\end{itemize}
	\item[\emph{(b)}]  All $\abs{x^G}$ are powers of two distinct primes, say $q$ and $r$. This happens if and only if $\{q, r\}\subseteq \pi$, $G$ is $\pi$-decomposable, and the Hall $\pi$-subgroup $H$ of $G$ satisfies that $H/\ze{H}$ is a Frobenius group with abelian kernel and complement of orders a $q$-power and an $r$-power, respectively. 
\end{itemize}
Furthermore, in all cases, $G$ has $\pi$-length at most $1$.
\end{corollary}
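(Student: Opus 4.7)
The plan is to apply Theorem \ref{teoprime} to the trivial core-factorisation $G = AB$ with $A = B = G$, which is legitimate by Remark (i). Under this specialisation, the hypothesis that $\abs{x^G}$ is a prime power for every $\pi$-element $x \in A \cup B$ is exactly the assumption of the corollary, so Theorem \ref{teoprime} immediately gives $\pi$-separability of $\pi$-length at most $1$ (which is the final clause), and, applying the dichotomy to $X = G$, the conclusion that either all $\pi$-element class sizes are powers of a single fixed prime $q$ (case (1) of the theorem) or they are powers of two fixed primes $q, r \in \pi$ with the Frobenius structure stated in (2). These cases correspond directly to (a) and (b) of the corollary.

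To promote the one-directional statements (1a) and (1b) of Theorem \ref{teoprime} to the biconditionals asserted in (a)(1) and (a)(2) of the corollary, I would supply the missing converses. For (a)(1), assume the Hall $\pi$-subgroup $H$ of $G$ is abelian. Then $H \leqslant \ce{G}{x}$ for every $x \in H$, so $\abs{x^G}$ divides $\abs{G : H}$, a $\pi'$-number; since by $\pi$-separability every $\pi$-element of $G$ is conjugate into $H$, all $\pi$-element class sizes are $\pi'$-numbers, forcing the fixed prime $q$ of case (a) to lie outside $\pi$. For (a)(2), observe that the clause "Sylow subgroups of $H$ all abelian \emph{except possibly for the prime $q$}" implicitly encodes $q \in \pi$: if $q \notin \pi$ then no exception is permitted, $H$ is abelian, and one is in (a)(1) instead. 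The biconditional in (b) is similarly immediate, because the described $\pi$-decomposable structure with $H/\ze{H}$ Frobenius of kernel a $q$-group and complement an $r$-group already forces the class sizes of $\pi$-elements to be $\{q,r\}$-powers with $\{q,r\} \subseteq \pi$.

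Since the whole argument amounts to specialising Theorem \ref{teoprime} and adding a couple of trivial converses, there is no serious obstacle. The only mild care required is tracking which direction of each ``if and only if'' is supplied by the theorem and which needs the short centraliser argument above, and noting that (a)(1) and (a)(2) are disjoint precisely because the wording of (a)(2) quietly requires $q \in \pi$.
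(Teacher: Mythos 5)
Your proposal is correct and follows essentially the same route as the paper: the paper offers no separate argument for this corollary beyond observing that it is the specialisation of Theorem \ref{teoprime} to the trivial core-factorisation $G=A=B$, which is exactly what you do, and your added remarks on the converse directions (abelian $H$ forces $\pi'$-class sizes, $\pi$-decomposability forces $\pi$-class sizes) are the routine completions the paper leaves implicit. The only place you are slightly more casual than ideal is in calling the converse of (b) ``immediate''---showing that the stated Frobenius structure really yields only $q$-power and $r$-power class sizes takes a short computation in the spirit of Step 5 of Theorem \ref{teosilvio}---but this is at the same level of detail as the paper itself.
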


The reader might think about the possibility of obtaining in Theorem \ref{teoprime} extra information about the $\pi$-structure of the whole group $G$. Nevertheless, in a factorised group, several different cases may occur with the primes appearing as class sizes of $\pi$-elements in the factors, so the $\pi$-structure is quite unrestricted:

\begin{example}
Let $A$ be a symmetric group of degree $3$ and $B$ the semidirect product of a cyclic group of order 5 acting on a cyclic group of order 6. Consider $G=A\times B$, and $\pi=\{2,3\}$. Clearly, the hypotheses in Theorem \ref{teoprime} are satisfied, but neither the Hall $\pi$-subgroup of $G$ is abelian (as in case (1)(a)) nor $G$ is $\pi$-decomposable (case (2)).
\end{example}



\end{document}